\documentclass[11pt,a4paper]{article}
\usepackage{amsfonts,amsgen,amsmath,amsthm,caption,color,eepic,graphicx,indentfirst,mathrsfs,multirow,tikz,verbatim}

\theoremstyle{definition}
\newtheorem{case}{Case}
\newtheorem{subcase}{Case}[case]

\newtheorem{claim}{Claim}

\newtheorem{construction}{Construction}
\newtheorem{corollary}{Corollary}

\newtheorem{lemma}{Lemma}
\newtheorem{proposition}{Proposition}

\newtheorem{theorem}{Theorem}

\newcounter{mathitem}
\newenvironment{mathitem}
  {\begin{list}{{$(\roman{mathitem})$}}{
   \setcounter{mathitem}{0}
   \usecounter{mathitem}
   \setlength{\topsep}{0pt plus 2pt minus 0pt}
   \setlength{\parskip}{0pt plus 2pt minus 0pt}
   \setlength{\partopsep}{0pt plus 2pt minus 0pt}
   \setlength{\parsep}{0pt plus 2pt minus 0pt}
   \setlength{\leftmargin}{25pt}
   \setlength{\itemsep}{0pt plus 2pt minus 0pt}}}
  {\end{list}}

\usetikzlibrary{shapes.geometric, positioning} 

\newcommand{\drawCfive}[8]{
  \begin{scope}[shift={(#1, #2)}] 
  \draw (90:2) -- (162:2) -- (234:2) -- (306:2) -- (18:2) -- cycle; 
  \node at (90:2) [circle, fill=#4, fill opacity=1, inner sep=2pt] {}; 
  \node at (162:2) [circle, fill=#5, fill opacity=1, inner sep=2pt] {};
  \node at (234:2) [circle, fill=#6, fill opacity=1, inner sep=2pt] {};
  \node at (306:2) [circle, fill=#7, fill opacity=1, inner sep=2pt] {};
  \node at (18:2) [circle, fill=#8, fill opacity=1, inner sep=2pt] {};
  \node at (90:2.5) {$v_1$}; 
  \node at (162:2.5) {$v_2$};
  \node at (234:2.5) {$v_3$};
  \node at (306:2.5) {$v_4$};
  \node at (18:2.5) {$v_5$};
  \node at (0, -3) {#3}; 
  \end{scope}
}

\newcommand{\drawCsix}[9]{
  \begin{scope}[shift={(#1, #2)}] 
  \draw (90:1.8) -- (150:1.8) -- (210:1.8) -- (270:1.8) -- (330:1.8) -- (30:1.8) -- cycle; 
  \node at (90:1.8) [circle, fill=#4, fill opacity=1, inner sep=2pt] {}; 
  \node at (150:1.8) [circle, fill=#5, fill opacity=1, inner sep=2pt] {};
  \node at (210:1.8) [circle, fill=#6, fill opacity=1, inner sep=2pt] {};
  \node at (270:1.8) [circle, fill=#7, fill opacity=1, inner sep=2pt] {};
  \node at (330:1.8) [circle, fill=#8, fill opacity=1, inner sep=2pt] {};
  \node at (30:1.8) [circle, fill=#9, fill opacity=1, inner sep=2pt] {};
  \node at (90:2.3) {$v_1$}; 
  \node at (150:2.3) {$v_2$};
  \node at (210:2.3) {$v_3$};
  \node at (270:2.3) {$v_4$};
  \node at (330:2.3) {$v_5$};
  \node at (30:2.3) {$v_6$};
  \node at (0, -3) {#3}; 
  \end{scope}
}

\begin{document}

\title{\bf\Large Berge tight cycles of all lengths in hypergraphs\thanks{Supported by NSFC (Nos. 12171393, 12571378).}
}
\author{
Minghui Yu$^{a,b}$, Binlong Li$^{a,b}$, Ruonan Li$^{a,b,}$\thanks{Corresponding author. E-mail addresses: mhyu@mail.nwpu.edu.cn (M. Yu), binlongli@nwpu.edu.cn (B. Li), rnli@nwpu.edu.cn (R. Li)}~\\[2mm]
\small $^{a}$School of Mathematics and Statistics, \\
\small Northwestern Polytechnical University, Xi'an, Shaanxi 710129, China\\
\small $^{b}$Xi'an-Budapest Joint Research Center for Combinatorics, \\
\small Northwestern Polytechnical University, Xi'an, Shaanxi 710129, China
}

\date{}

\maketitle

\begin{abstract}
Given a set $R$ of positive integers, an $R$-graph $H = (V, E)$ is a hypergraph where the cardinality of each hyperedge belongs to $R$. If $R = \{r\}$, we sometimes refer to the hypergraph as an $r$-graph rather than an $R$-graph. For a set $S \subseteq V$, let $d_H(S)$ denote the number of hyperedges of $H$ containing $S$. Given a nonnegative integer $s$, the minimum $s$-degree $\delta_s(H)$ is the minimum of $d_H(S)$ over all $s$-vertex subsets $S$ of $V$. Let $r$ and $t$ be positive integers with $r < t$. We denote by $C_t^r$ the $t$-vertex $r$-uniform tight cycle, which is an $r$-graph with at least three hyperedges whose vertices admit a cyclic ordering such that every $r$ consecutive vertices form a hyperedge. In particular, $C_t^2$ is the classical cycle $C_t$ in $2$-graphs. For hypergraphs $F$ and $H$, we say that $H$ is a Berge-$F$ if there exist an injection $f \colon V(F) \to V(H)$ and a bijection $g \colon E(F) \to E(H)$ such that $\{f(v): v \in e\} \subseteq g(e)$ for all $e \in E(F)$.

Lu and Wang [Discrete Math. 344 (2021), 112462] proved that every $[3]$-graph $H$ on $n \geq 6$ vertices with $\delta_2(H) \geq 1$ contains a Berge-$C_t$ for all $3 \leq t \leq n$. In this paper, we prove that for any positive integer $r$ and any set $R \subseteq [k]$ with $k \geq 2$, there exists an integer $n_0 = n_0(k,r)$ such that every $R$-graph $H$ on $n \geq n_0$ vertices with $\delta_r(H) \geq 1$ contains a Berge-$C_t^r$ for all $r+1 \leq t \leq n$. In particular, when $k = 4$ and $r = 3$, we show that every $[4]$-graph $H$ on $n \geq 9$ vertices with $\delta_3(H) \geq 1$ contains a Berge-$C_t^3$ for all $4 \leq t \leq n$. We also characterize all the counterexamples when $4 \leq n \leq 8$.

\medskip
\noindent {\bf Keywords:} Berge tight cycle; Berge hypergraph; $4$-uniform hypergraph; extremal hypergraph.
\smallskip
\end{abstract}

\section{Introduction}\label{S1}
Given a set $R$ of positive integers, an \emph{$R$-uniform hypergraph} ($R$-graph) $H = (V, E)$ consists of a vertex set $V$ and a hyperedge set $E$ where the cardinality of each hyperedge belongs to $R$. If $R = \{r\}$, we sometimes refer to the hypergraph as an $r$-graph rather than an $\{r\}$-graph. By convention, a simple graph corresponds to a $2$-graph. A hyperedge is said to be a $k$-hyperedge if it has precisely $k$ vertices. We use $K_n^r$ to denote the \emph{complete $r$-graph} on $n$ vertices, whose hyperedge set comprises all $r$-element subsets of its vertex set. For an $R$-graph $H$ and a set $S \subseteq V$, let $d_H(S)$ denote the number of hyperedges of $H$ containing $S$. Given a nonnegative integer $s$, the minimum $s$-degree $\delta_s(H)$ is the minimum of $d_H(S)$ over all $s$-vertex subsets $S$ of $V$. Note that $\delta_0(H)$ is the number of hyperedges of $H$. For positive integers $a \leq b$, we write $[a,b] = \{a, a+1, \dots, b\}$ and $[b] = [1,b]$ for simplicity.

There are many distinct notions of cycles in hypergraphs. Let $\ell$, $r$ and $t$ be positive integers with $\ell < r < t$. The $t$-vertex $r$-uniform \emph{$\ell$-cycle}, denoted $C_t^{r,\ell}$, is an $r$-graph with at least three hyperedges whose vertices have a cyclic ordering such that each hyperedge consists of exactly $r$ consecutive vertices and any two consecutive hyperedges intersect in precisely $\ell$ vertices. Such a cycle is called a \emph{loose cycle} when $\ell = 1$, and a \emph{tight cycle} when $\ell = r - 1$. For $\ell = r - 1$, we abbreviate $C_t^{r, r-1}$ to $C_t^r$. In particular, $C_t^2$ is the classical cycle $C_t$ in $2$-graphs. The \emph{length} of an $\ell$-cycle is defined as the number of hyperedges that it contains. An $r$-graph $H$ is said to contain a \emph{Hamilton $\ell$-cycle} if it has an $\ell$-cycle that spans all vertices of $H$. Since an $r$-uniform $\ell$-cycle on $t$ vertices contains exactly $t/(r-\ell)$ hyperedges, a necessary condition for the existence of a Hamilton $\ell$-cycle in an $n$-vertex $r$-graph is that $(r-\ell)$ divides $n$.

Gerbner and Palmer \cite{GP} introduced the notion of Berge hypergraphs, a generalization of the well-known Berge paths and Berge cycles originally defined in \cite{B}. For hypergraphs $F$ and $H$, we say that $H$ is a \emph{Berge-$F$} if there exist an injection $f \colon V(F) \to V(H)$ and a bijection $g \colon E(F) \to E(H)$ such that $\{f(v): v \in e\} \subseteq g(e)$ for every $e \in E(F)$. The vertices in $\{f(v): v \in V(F)\}$ are called the \emph{defining vertices} of the Berge-$F$. Given a hypergraph $F$, there can be multiple Berge-$F$, and $F$ itself is a Berge-$F$. In particular, a \emph{Berge-$C_t^r$} consists of a sequence of $t$ distinct vertices $v_1, v_2, \dots, v_t$ and a sequence of $t$ distinct hyperedges $h_1, h_2, \dots, h_t$, where the following condition holds: for each $i \in [t]$, $\{v_i, v_{i+1}, \dots, v_{i+r-1}\} \subseteq h_i$ (indices are taken modulo $t$). Furthermore, we say that the vertex set $\{v_i, v_{i+1}, \dots, v_{i+r-1}\}$ is \emph{embedded} in the hyperedge $h_i$ when $\{v_i, v_{i+1}, \dots, v_{i+r-1}\} \subseteq h_i$. A hypergraph $H$ is said to \emph{contain} a Berge-$F$ if some Berge-$F$ is a subhypergraph of $H$; otherwise, $H$ is called \emph{Berge-$F$-free}. The Fano plane $\mathbf{F}$ is the unique $3$-graph with seven vertices and seven hyperedges such that every pair of vertices is contained in exactly one hyperedge.

In 1952, Dirac \cite{D} proved that every graph $G$ on $n \geq 3$ vertices with $\delta_1(G) \geq n/2$ contains a Hamilton cycle. Katona and Kierstead \cite{KK} first investigated the minimum $r$-degree condition for Hamilton $\ell$-cycles in hypergraphs. Over the past two decades, Dirac-type conditions for the existence of Hamilton $\ell$-cycles have been extensively studied; we refer the reader to the recent surveys \cite{RR, Z} for further results. In particular, Reiher et al. \cite{RRRSS} extended Dirac's theorem to $3$-graphs and proved that every $3$-graph $H$ on sufficiently large $n$ vertices with $\delta_1(H) \geq (5/9+o(1))\binom{n}{2}$ contains a tight cycle $C_n^3$.

The Tur\'an numbers for Berge paths and Berge cycles have been extensively studied. We refer the reader to recent results in \cite{GLSZ, GS, GSZ}. Bermond et al. \cite{BGHS} first investigated the minimum $1$-degree condition in $r$-graphs, and this line of research was subsequently improved in \cite{CEP, CP, MHG}. Kostochka et al. \cite{KLM} determined the optimal minimum $1$-degree condition for $r$-graphs that forces Berge-$C_n$. F\"uredi et al. \cite{FKL} obtained the analogous condition in non-uniform hypergraphs. In addition, Halfpap and Magnan \cite{HM} established the minimum positive codegree threshold for Hamilton Berge cycles in $3$-graphs. Focusing on a hypergraph $H$ with $\delta_2(H) \ge 1$, Lu and Wang \cite{LW} studied the existence of Berge-$C_t$.

\begin{theorem}[Lu et al. \cite{LW}] \label{T1}
For a positive set $R \subseteq [k]$ with $k \geq 2$, there exists an integer $n_0 = n_0(k, r)$ such that every $R$-graph $H$ on $n \geq n_0$ vertices with $\delta_2(H) \geq 1$ contains a Berge-$C_t$ for all $3 \leq t \leq n$.
\end{theorem}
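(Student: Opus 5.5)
Our plan is to reduce the problem to a statement about $2$-graphs and then use a pancyclicity argument. For each pair $\{u,v\}\subseteq V$ we fix a hyperedge $h_{uv}\ni u,v$; it exists because $\delta_2(H)\ge 1$. A Berge-$C_t$ in $H$ is exactly a cycle $v_1\cdots v_t$ in $K_n$ together with a system of distinct representatives: distinct hyperedges $h_i\supseteq\{v_i,v_{i+1}\}$. The key difficulty is therefore \emph{distinctness}, since a single large hyperedge may cover many pairs.

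First we bound how many pairs one hyperedge can cover. Each hyperedge has at most $k$ vertices, so it covers at most $\binom{k}{2}$ pairs. The assignment $\{u,v\}\mapsto h_{uv}$ partitions the edge set of $K_n$ into at most $\binom{k}{2}$-sized cliques, each of order at most $k$. A cycle in $K_n$ that uses at most one edge from each such clique $Q_h$ (the pairs assigned to $h$) is then automatically a Berge cycle. To make this work we consider a proper edge colouring-type problem. Colour each pair by its assigned hyperedge. Every colour class is a subgraph of some $K_k$, so every vertex is incident to at most $k-1$ edges of each colour. We then seek \emph{rainbow} cycles of every length $3\le t\le n$ in this colouring of $K_n$.

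Next we construct the rainbow cycles. For long cycles we use a greedy/rotation argument (Pósa-type). We build a rainbow Hamilton path by extending a path $P$ at an endpoint $x$. There are at least $n-|P|$ edges from $x$ to vertices outside $P$. At most $(k-1)\cdot|P|$ of these are blocked by the $|P|-1$ colours already used, since each colour gives at most $k-1$ edges at $x$. This suffices only while $|P|\lesssim n/k$, so for the rest we use rotations and the abundance of colours. We would prefer to invoke a known result: a $K_n$ colouring in which each colour class has maximum degree at most $k-1$, i.e.\ is locally $(k-1)$-bounded, contains rainbow cycles of all lengths $3\le t\le n$ for $n\ge n_0(k)$. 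This follows the style of results by Albert--Frieze--Reed and Frieze--Krivelevich on rainbow Hamilton cycles under bounded colour multiplicity, where each colour is used at most $\binom{k}{2}=O(1)$ times, far below the $cn$ threshold. Pancyclicity versions of these results (rainbow cycles of all lengths) are known for $O(1)$-bounded colourings, and we would cite one. Otherwise we prove it by a probabilistic approach: choose a uniformly random ordering of the $t$ vertices forming a cycle. The expected number of colour repetitions is $O(t\cdot k^2/n)$, and we then use the Lovász Local Lemma (the lopsided version on random permutations) to show that with positive probability there are no repetitions. The length $t=n$ is handled by the Albert--Frieze--Reed argument.

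For short cycles, $t$ small relative to $n$, we argue directly. We fix $t$ vertices chosen greedily so that all $\binom{t}{2}$ pairs receive distinct colours. This is possible because each new vertex $w$ must avoid creating repeated colours with the previously chosen set $S$, which forbids only $O(k\binom{|S|}{2}\cdot k)$ vertices. Any cycle on such a set is then rainbow. This covers $t$ up to about $c\sqrt n$, and the local lemma covers the rest. The main obstacle is the Hamiltonian and near-Hamiltonian range $t\approx n$, where greedy choices fail. There we rely on the Local Lemma over random permutations (Albert--Frieze--Reed), whose dependency count is valid because each colour appears only $O(k^2)$ times. This is why $n_0$ depends on $k$.
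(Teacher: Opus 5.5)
Your overall architecture is the same as the paper's, which gets this statement as the case $r=2$ of Theorem~\ref{T0}: colour each pair of $K_n$ by a hyperedge containing it, note that each colour is used at most $\binom{k}{2}$ times, apply the $cn$-bounded rainbow Hamilton cycle theorem, and handle short cycles separately. However, the short-cycle step you actually carry out is false as stated.

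A colour class $Q_h$ is only a subgraph of $K[h]$, not a clique. You first call the classes cliques, then correctly say ``subgraph of some $K_k$'', and the difference breaks the greedy count. Avoiding colours already used inside $S$ does cost only $O(k|S|^2)$ vertices. But you also need the new pairs $ws$, $s\in S$, to get pairwise distinct colours, and $c(ws)=c(ws')$ no longer forces $ss'$ to carry that colour. For instance, let $H=K_n^3$ and, for fixed $s,s'$, assign both $sw$ and $s'w$ to the triple $\{s,s',w\}$ for every $w$. Every colour is then used at most three times, yet once $s,s'\in S$ every candidate $w$ creates a repetition, so the greedy can stall at $|S|=2$.

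The set has to be chosen globally instead. At most $\binom n2$ hyperedges occur as colours, so a uniformly random $N$-set meets each of them in at most two vertices with probability $1-O(k^3N^3/n)$, and then all pairs inside it receive distinct colours. This is exactly Lemma~\ref{L22}. Your first-moment estimate would also work here, provided the $t$ cycle vertices are a random injection into all of $V$. The expected number of repeated colours is then $O(tk/n+t^2k^2/n^2)$, which is below $1$ for $t\le cn/k^2$.

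The second issue is the range of $t$ between about $\sqrt n$ and $n-1$. You delegate it either to an unnamed ``pancyclic'' rainbow theorem or to a lopsided local lemma over random injections that you do not carry out. For $t$ linear in $n$ the first-moment count need not be below $1$, so that route genuinely needs the local lemma. Such a theorem does exist: B\"ottcher, Kohayakawa and Procacci find rainbow copies of every $n$-vertex graph of maximum degree $\Delta$ in $n/(51\Delta^2)$-bounded colourings of $K_n$.

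No new tool is needed, though. Restrict your colouring to any $t$-subset $T$. This is a colouring of $K_t$ in which each colour is still used at most $\binom k2$ times, hence it is $ct$-bounded once $t\ge N(k)$. So the Hamilton theorem applied to $K_t$ already gives a rainbow $C_t$ for every $N\le t\le n$. This is how the paper proceeds, applying Lemma~\ref{L21} to the $2$-trace $H_T^2$ together with Proposition~\ref{P}. It also means the short-cycle part only needs a rainbow $K_N$ of constant size $N=N(k)$, which the random-subset argument supplies for $n\ge n_0(k)$.
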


\begin{theorem}[Lu et al. \cite{LW}] \label{T2}
Every $[3]$-graph $H$ on $n \geq 6$ vertices with $\delta_2(H) \geq 1$ contains a Berge-$C_t$ for all $3 \leq t \leq n$.
\end{theorem}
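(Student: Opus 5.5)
Theorem~\ref{T2} concerns $[3]$-graphs $H$ on $n\ge 6$ vertices in which every pair of vertices lies in some hyperedge. I would prove it by induction on $n$ with the target lengths handled in two regimes: short cycles ($t$ small) by direct construction, and long cycles ($t$ near $n$) by extending shorter ones.

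\textbf{Preliminary reduction.} Since $\delta_2(H)\ge 1$, each pair $uv$ is covered by a $2$-edge $\{u,v\}$ or by a $3$-edge containing it. First I would fix a minimal covering subfamily $E'\subseteq E(H)$. Each $3$-edge in $E'$ covers three pairs, so $E'$ behaves like a partial Steiner-type system mixed with ordinary edges. Minimality does no harm, since deleting hyperedges only makes finding a Berge cycle harder. A Berge-$C_t$ is a cyclic sequence of vertices $v_1,\dots,v_t$ together with distinct hyperedges $h_i\supseteq\{v_i,v_{i+1}\}$. The only obstruction is that a single $3$-edge may not be used twice.

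\textbf{Key auxiliary device.} To avoid reusing a $3$-edge, I would assign to every pair $uv$ some hyperedge $e(uv)$ containing it. This gives a complete graph $K_n$ whose edges are coloured by hyperedges, and each colour class is a single edge or a subset of a triangle. A Berge-$C_t$ is then exactly a $t$-cycle in $K_n$ that is \emph{rainbow}, i.e.\ has all colours distinct, where we may also re-choose $e(uv)$ among all hyperedges containing $uv$.

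Every colour class has at most $3$ edges, all lying inside one triangle. Any two edges of a cycle that share a colour therefore lie in a common triangle and are adjacent in $K_n$. For $t\ge 4$ they must be consecutive edges $xy,yz$ of the cycle, with $\{x,y,z\}$ a hyperedge. Such a configuration can be repaired by shortcutting: replace $x\,y\,z$ by $x\,z$, which uses the third pair of the same triangle, and then reinsert $y$ elsewhere.

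\textbf{Main steps.}

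(1) Show that $K_n$ has a rainbow Hamilton cycle for $n\ge 6$. I would take a Hamilton cycle minimizing the number of \emph{monochromatic cherries} $xyz$ (consecutive edges of the same colour). If a cherry exists, perform a $2$-opt exchange: pick a cycle edge $ab$ far from it and replace $xy,ab$ by $xa,yb$. There are about $n$ choices for $ab$, and each colour class is small, so for $n\ge 6$ some exchange strictly reduces the number of cherries. This counting step is where $n\ge 6$ enters. The bound is sharp, since the Fano-like configuration with two disjoint triangles on $5$ vertices, or $K_5$ covered by triangles, fails.

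(2) For $3\le t<n$, apply the same argument to an induced sub-configuration on any $t$ vertices; the restriction still satisfies $\delta_2\ge 1$. This works for $t\ge 6$ by step (1). For $t\in\{3,4,5\}$, I would argue directly: find a rainbow triangle, i.e.\ a triple not equal to a single hyperedge with three distinct covering hyperedges, which exists since $n\ge 6$ and the triangles cannot cover all pairs using only one colour per triangle. Then extend it to lengths $4$ and $5$ by inserting vertices, checking colours via the cherry-repair argument.

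\textbf{Main obstacle.} I expect the hardest part to be step (1) for small $n$ (here $6\le n\le 8$), where the counting in the exchange argument is tight. There I would fall back to a case analysis on the number of $3$-edges in the minimal cover $E'$.
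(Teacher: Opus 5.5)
Your colouring device and the restriction step are sound. A Berge-$C_t$ is a rainbow $t$-cycle in the hyperedge-colouring of $K_n$, and every colour class lies in a triangle. Restricting to a $t$-set (the trace of Proposition~\ref{P}) reduces every $t\ge 6$ to the Hamiltonian case on $t$ vertices; this is the same reduction the paper uses for Theorem~\ref{T0}. (The paper only cites Theorem~\ref{T2} and does not prove it.) But the whole theorem then rests on Step (1), which you assert rather than prove. Your heuristic ``about $n$ choices for $ab$, each colour class is small'' fails, because the number of exchanges that create a new cherry is not bounded by a constant. Take a cherry at $u_2$ on the cycle $u_1u_2\cdots u_nu_1$; deleting $u_1u_2$ and $u_ju_{j+1}$ places the new edge $u_1u_j$ next to $u_{j-1}u_j$. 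Suppose $H$ contains the fan of triples $\{u_1,u_{j-1},u_j\}$ for $3\le j\le n-1$, both $u_1u_j$ and $u_{j-1}u_j$ are coloured by $\{u_1,u_{j-1},u_j\}$, and all remaining pairs are $2$-edges. Then each of these $n-3$ exchanges creates a cherry at $u_j$ while destroying only the one at $u_2$. So you would need a genuine argument playing different exchange families and recolourings against each other. It must also be sharp exactly at $n=6$: the four triples $\{1,2,3\},\{1,4,5\},\{2,4,5\},\{3,4,5\}$ cover $K_5$ and admit no rainbow $C_5$, and a crude count of available exchanges cannot locate that threshold. You concede that for $6\le n\le 8$ you would fall back on an unspecified case analysis. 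That is where the content lies, and Step (2) needs Step (1) at $n=t$ for $t=6,7,8$.

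The short lengths $t=4,5$ are also not argued. ``Insert vertices and check colours via cherry-repair'' is not a proof. The repair move itself (replace $x\,y\,z$ by $x\,z$ and reinsert $y$) gives no control over the colours of the two new edges at $y$, which can clash with each other or with the cycle. Since $t=5$ genuinely fails on five vertices, you must show that every configuration on six vertices has a $5$-set whose restriction carries a rainbow $C_5$. That needs a real structural analysis; compare Lemmas~\ref{L11} and~\ref{L12} for the $3$-uniform analogue. Only $t=3$ is essentially fine: on any four vertices, four non-rainbow triangles would need four distinct colour classes, each with at least two of the six pairs.

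A more tractable route, and the one the paper follows for Theorem~\ref{T}, replaces the global local search by a one-vertex insertion lemma in the spirit of Lemma~\ref{L}. Given a rainbow $C_t$ on $T$ and $v\notin T$, work in the restriction to $T\cup\{v\}$ and show that $v$ can be inserted after local rerouting. Then only finitely many local configurations need checking, and together with direct proofs of the small lengths this yields every $t$. (Minor: ``two disjoint triangles on $5$ vertices'' do not exist.)
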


Lu and Wang \cite{LW1, LW2} also investigated Ramsey-type and Tur\'an-type problems for Berge-$G$, where $G$ is a $2$-graph. In fact, only a few studies concern the existence of Berge-$F$ when $F$ is a hypergraph. See \cite{DGS, GSS1, GSS2, MO} for Ramsey-type results concerning Berge-$C_t^r$ with $r \geq 3$, and \cite{AS, BGKKP} for Tur\'an-type results. In this paper, we consider the existence of Berge-$C_t^r$ in hypergraphs for $r \geq 3$, which generalizes the results of Theorems \ref{T1} and \ref{T2}.

\begin{theorem}\label{T0}
For a positive integer $r$ and a set $R \subseteq [k]$ with $k \geq 2$, there exists an integer $n_0 = n_0(k, r)$ such that every $R$-graph $H$ on $n \geq n_0$ vertices with $\delta_r(H) \geq 1$ contains a Berge-$C_t^r$ for all $r+1 \leq t \leq n$.
\end{theorem}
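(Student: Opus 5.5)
We first observe that $\delta_r(H)\ge 1$ forces every hyperedge containing an $r$-set to have size in $[r,k]$. Hence we may discard all hyperedges of size less than $r$ and assume $R\subseteq[r,k]$. For each $r$-set $S$ we fix one hyperedge $e(S)\supseteq S$. Each hyperedge contains at most $\binom{k}{r}$ distinct $r$-sets, so the map $S\mapsto e(S)$ is at most $\binom{k}{r}$-to-one. The $r$-graph of all $r$-sets is complete, $K_n^r$, and it contains the tight cycle $C_t^r$ for every $t$ with $r+1\le t\le n$. The plan is to find, inside $K_n^r$, a copy of $C_t^r$ whose $t$ consecutive $r$-sets $S_1,\dots,S_t$ can be assigned pairwise distinct hyperedges $h_i\supseteq S_i$. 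That is exactly a Berge-$C_t^r$.

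\textbf{Step 1: a colouring and Ramsey.} We colour each $r$-set $S$ by an ``address'' recording how $S$ sits inside $e(S)$. Concretely, we fix an arbitrary linear order on $V$. To each $S$ we assign the pattern describing the relative positions of the elements of $S$ among the elements of $e(S)$ in this order, together with the size $|e(S)|$. The number of colours is bounded by a function $c(k,r)$. By the hypergraph Ramsey theorem, for $n\ge n_0(k,r)$ there is a set $W$ of size $m=m(k,r)$, chosen large, all of whose $r$-subsets receive the same colour. Inside $W$ the collisions $e(S)=e(S')$ are then highly structured. If $e(S)=e(S')$ for two $r$-subsets $S\neq S'$ of $W$, then $e(S)$ contains $S\cup S'$, so $|e(S)\cap W|\ge r+1$. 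Since $|e(S)|\le k$, each hyperedge meets $W$ in at most $k$ vertices. Therefore only a bounded number of $r$-subsets of $W$ share any given hyperedge.

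\textbf{Step 2: short cycles.} We handle small $t$, namely $r+1\le t\le T$ for a suitable constant $T=T(k,r)$, entirely inside $W$. It suffices to find $t$ vertices of $W$ in cyclic order whose $t$ consecutive $r$-windows have distinct $e(\cdot)$. We use a greedy or probabilistic choice: pick $t$ random vertices of $W$. A collision $e(S_i)=e(S_j)$ requires the union $S_i\cup S_j$ to lie in a single hyperedge. In the homogeneous setting this forces a rigid pattern, which either occurs with small probability or can be shown to be impossible. If $e(S_i)=e(S_j)$ is unavoidable, we exploit it instead. Any $r$-set $S'$ inside $e(S_i)$ with $S'\ne S_i$ has its own chosen hyperedge, and we can choose to re-select $e(S')$ among the hyperedges containing $S'$. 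Only when every such choice collapses to one hyperedge do we fall back on a separate argument, which we do not settle in advance. That fallback case is exactly a large clique-like structure covered by few hyperedges. It is excluded by the observation that $\binom{m}{r}$ sets cannot be covered by fewer than $\binom{m}{r}/\binom{k}{r}$ hyperedges, each containing at most $k$ vertices of $W$.

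\textbf{Step 3: long cycles.} For $T<t\le n$ we build the cycle by absorption and extension. We use a long tight path in the ``auxiliary'' system: vertices $v_1,\dots,v_t$ whose windows receive distinct hyperedges. We extend one vertex at a time. A new vertex $v$ creates a single new window $\{v_{t-r+2},\dots,v_t,v\}$. Its $e(\cdot)$ might coincide with a previously used hyperedge. However, each used hyperedge blocks at most $k$ candidate vertices, so at most $k\cdot t$ vertices are blocked. For $t$ close to $n$ this count is too large, and this is the main obstacle. To overcome it we reserve, as in Step 1, a Ramsey-homogeneous flexible set $W$ that acts as a reservoir. We first construct a long path covering $V\setminus W$ greedily, where the available choices are counted more carefully. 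A vertex $v$ is blocked only if the specific $r$-set $\{v_{t-r+2},\dots,v_t,v\}$ has $e$ equal to an earlier used hyperedge $h$; then $h\supseteq\{v_{t-r+2},\dots,v_t\}$, and only $O_k(1)$ used hyperedges contain that fixed $(r-1)$-set among recent windows. After this we close the path and adjust its length through $W$, using the fact from Step 2 that $W$ admits tight paths of every bounded length with any prescribed endpoints. Controlling these re-routings near the closing point, so that all $t$ hyperedges stay distinct for every $t$ up to $n$, is the step we expect to be hardest.
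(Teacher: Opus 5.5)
\textbf{The gap is Step 3.} You never actually produce a Berge-$C_t^r$ for $t$ beyond a constant, and in particular not for $t=n$; you say yourself that the closing and the length adjustment are left open. The supporting claims also fail as stated:
\begin{itemize}
\item A hyperedge used for a window far back on the path may contain the $(r-1)$-set $\{v_{t-r+2},\dots,v_t\}$, and nothing limits how many do, so ``only $O_k(1)$ used hyperedges'' is unjustified.
\item Step 2 never gives tight paths in $W$ with prescribed ends.
\item The windows straddling $W$ and the path are not controlled by the homogeneity of $W$ at all.
\end{itemize}
Greedy extension breaks down when few vertices remain, and building absorbers here would amount to reproving a rainbow Hamiltonicity theorem.

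The missing idea is that, for the colouring $S\mapsto e(S)$ of $K_n^r$, every colour class has at most $\binom{k}{r}$ members. So a Berge-$C_n^r$ is exactly a rainbow tight Hamilton cycle in a bounded colouring. The Dudek--Frieze--Ruci\'nski theorem (Theorem~\ref{T3} with $\ell=r-1$) supplies this for $n\ge N(k,r)$: there is no divisibility condition, and $\binom{k}{r}\le cn$ for large $n$. Then no path building is needed for intermediate lengths either. For any $t$-set $T$ with $t\ge N$, the $r$-trace $H_T^r$ still has $\delta_r\ge 1$ and edges of size at most $k$. A Berge cycle in the trace lifts to one in $H$ on the same defining vertices, since distinct traces come from distinct hyperedges. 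This is how the paper obtains every length $t\ge N$.

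\textbf{Step 2 is incomplete as written, but your Step 1 already closes it.} The fallback case is left unsettled, and the covering count excludes nothing. However, suppose $S\ne S'$ are $r$-subsets of the homogeneous set $W$, so they have equal $|e(\cdot)|$ and the same rank set $P$. Then $e(S)=e(S')=e$ would force both $S$ and $S'$ to be the elements of $e$ with ranks in $P$, so $S=S'$. Hence $S\mapsto e(S)$ is injective on $\binom{W}{r}$. That is a Berge-$K_m^r$, and it gives Berge-$C_t^r$ for all $r+1\le t\le m$ at once.

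This is a valid alternative to Lemma~\ref{L22}, which uses a random $s$-set meeting every edge of an edge-minimal $H$ in at most $r$ vertices. Your route avoids the minimality and counting step at the price of a Ramsey-size $n_0$.

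An even simpler route exists. In a uniformly random cyclic sequence of $t$ vertices of $V$, two distinct windows sharing $a<r$ vertices receive the same $e(\cdot)$ with probability at most $(k/(n-t))^{r-a}$. A union bound over the $\binom{t}{2}$ pairs then handles any bounded $t$.

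Taking $m\ge N(k,r)$ and combining either version with the rainbow-Hamilton-plus-trace step would complete the proof.
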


In particular, when $k = 4$ and $r = 3$, we show that every $[4]$-graph $H$ on $n \geq 9$ vertices with $\delta_3(H) \geq 1$ contains a Berge-$C_t^3$ for all $4 \leq t \leq n$. We also characterize all the counterexamples when $4 \leq n \leq 8$.

\begin{construction}\label{Construction1}
Let $\mathcal{H}_4 = \{H_4, H_4', H_4''\}$ be a family of hypergraphs on $[4]$ such that
\begin{mathitem}
\item[$(1)$] $E(H_4) = \{\{1, 2, 3, 4\}\}$;
\item[$(2)$] $E(H_4') = \{\{1, 2, 3, 4\}, \{1, 2, 3\}\}$;
\item[$(3)$] $E(H_4'') = \{\{1, 2, 3, 4\}, \{1, 2, 3\}, \{1, 2, 4\}\}$.
\end{mathitem}
\end{construction}

\begin{construction}\label{Construction2}
Let $\mathcal{H}_5 = \{H_5, H_5', H_5'', H_5'''\}$ be a family of hypergraphs on $[5]$ such that
\begin{mathitem}
\item[$(1)$] $E(H_5) = \{\{1, 2, 3, 4\}\} \cup \{\{i, j, 5\}: \{i, j\} \in \binom{[4]}{2}\}$;
\item[$(2)$] $E(H_5') = \{\{1, 2, 3, 4\}, \{1, 2, 3, 5\}, \{1, 4, 5\}, \{2, 4, 5\}, \{3, 4, 5\}\}$;
\item[$(3)$] $E(H_5'') = \{\{1, 2, 3, 4\}, \{1, 2, 3, 5\}, \{1, 2, 4, 5\}, \{3, 4, 5\}\}$;
\item[$(4)$] $E(H_5''') = \{\{1, 2, 3, 4\}, \{1, 2, 3, 5\}, \{1, 2, 4, 5\}, \{1, 3, 4, 5\}\}$.
\end{mathitem}
\end{construction}

\begin{construction}\label{Construction3}
Let $H_6$, $H_7$, $H_8$ be hypergraphs such that
\begin{mathitem}
\item[$(1)$] $V(H_6) = [6]$ and $E(H_6) = \{\{1, 2, 3, 4\}, \{1, 2, 5, 6\}, \{3, 4, 5, 6\}\} \cup \{\{i, j, k\}: i \in \{1, 2\}, j \in \{3, 4\}, k \in \{5, 6\}\}$;
\item[$(2)$] $V(H_7) = [7]$ and $E(H_7) = \{e, [7] \setminus e:  e \in E(\mathbf{F})\}$;
\item[$(3)$] $V(H_8) = [8]$ and $E(H_8) = \{e \cup \{8\}, [7] \setminus e: e \in E(\mathbf{F})\}$,
\end{mathitem}
where $\mathbf{F}$ denotes a Fano plane on $[7]$.
\end{construction}

\begin{theorem}\label{T}
Every $\{3, 4\}$-graph $H$ on $n \geq 4$ vertices with $\delta_3(H) \geq 1$ contains a Berge-$C_t^3$ for all $4 \leq t \leq n$, unless $H \in \mathcal{H}_4$ and $t = 4$ or $H \in \mathcal{H}_5 \cup \{H_6, H_7, H_8\}$ and $t = 5$.
\end{theorem}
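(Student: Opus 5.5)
We prove Theorem~\ref{T} by induction on $t$ downward from $n$ combined with an extension argument upward, but the cleanest route is to proceed by induction on $n$ and, for each $n$, to establish Berge-$C_t^3$ for every $4\le t\le n$. First we reduce to the case where $H$ is \emph{minimal}: deleting a hyperedge or shrinking a $4$-hyperedge to a $3$-subset preserves the conclusion in the contrapositive direction (a Berge cycle in the smaller hypergraph is one in $H$), so we may assume $H$ is edge-minimal subject to $\delta_3(H)\ge1$. In such an $H$ every hyperedge $h$ contains a triple covered only by $h$; in particular every $3$-hyperedge is the unique cover of itself, and every $4$-hyperedge uniquely covers at least one of its four triples. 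This gives the counting bound $|E(H)|\ge \binom n3/4$, with a lot of structure when many $4$-edges are present.

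The base cases $n\le 8$ are handled by a direct (partly computer-free but exhaustive) case analysis, which is where the exceptional families $\mathcal H_4,\mathcal H_5,H_6,H_7,H_8$ arise: for $t=4$ one needs four distinct hyperedges embedding the four cyclic triples of some $4$-set, which fails only when $n=4$ and $H$ has at most three edges, i.e.\ $H\in\mathcal H_4$; for $t=5$ we need five distinct edges realizing the five consecutive triples of a cyclic $5$-ordering, and we check via Hall's theorem on the bipartite incidence between the five triples and the edges containing them. The exceptional graphs are exactly those where for every cyclic $5$-ordering some set of $j$ consecutive triples is covered by fewer than $j$ edges; the Fano-based $H_7,H_8$ are verified separately. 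For $t\ge 6$ and $n\le 8$ we argue directly by Hall's theorem as well, since with $t\ge6$ the triples of a cyclic order are spread enough that a single $4$-edge covers at most two consecutive triples.

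For the inductive step ($n\ge 9$) we use two tools. \emph{Short cycles}: for $4\le t\le 8$ we pick a suitable $t$-subset $S$ on which the induced trace satisfies $\delta_3\ge1$ and is not exceptional, and apply the base case; the key point is that with $n\ge9$ there is freedom to choose $S$ avoiding the rigid configurations of $\mathcal H_5,H_6,H_7,H_8$ (any such configuration forces every triple of $S$ into edges inside $S$, and an outside vertex then yields an alternative). \emph{Extension}: given a Berge-$C_t^3$ with $t<n$ and an outside vertex $x$, we try to insert $x$ between $v_i$ and $v_{i+1}$, which requires replacing the edges $h_{i-1},h_i$ with three edges covering $\{v_{i-1},v_i,x\}$, $\{v_i,x,v_{i+1}\}$, $\{x,v_{i+1},v_{i+2}\}$ that are distinct from the remaining cycle edges. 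Since each of these triples is covered by some edge and each edge is a $\{3,4\}$-set, these covering edges contain $x$ and hence at most one cycle vertex besides those listed; a pigeonhole over the $t$ insertion positions shows that conflicts (the required edge coinciding with another cycle edge, or two required triples sharing one $4$-edge) can occur at only a bounded number of positions, using that a $4$-edge containing $x$ meets the cycle in at most three vertices.

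The main obstacle is the extension step when the three required edges collapse, e.g.\ when $\{v_{i-1},v_i,x,v_{i+1}\}$ is a single $4$-edge covering two of the needed triples. Here I would first try to reroute: instead of inserting $x$ at position $i$, use the edge $\{v_{i-1},v_i,x,v_{i+1}\}$ to replace $h_{i-1}$ and shift, or switch to inserting $x$ at an adjacent position, showing that if all $t$ positions are blocked then the $4$-edges through $x$ form a rigid pattern forcing $t\le 8$ and $H$ on a small vertex set, contradicting $n\ge9$ or landing in the already-settled base cases.
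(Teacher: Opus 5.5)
There is a genuine gap in the extension step, which is where the real difficulty of the theorem lies. Your claim that insertion conflicts occur at only a bounded number of positions is false, and so is the fallback that a cycle blocked at every position forces $t\le 8$.

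Take $H$ on $\{v_1,\dots,v_t,x\}$, $t\ge5$, whose $4$-edges are $f_i=\{v_{i-1},v_i,v_{i+1},x\}$ for all $i$ (indices mod $t$). Add as a $3$-edge every triple not contained in some $f_i$. Then:
\begin{itemize}
\item $\delta_3(H)\ge1$;
\item $\{v_j,v_{j+1},v_{j+2}\}$ lies only in $f_{j+1}$, so $v_1,\dots,v_t$ is a Berge-$C_t^3$ with forced edges $h_j=f_{j+1}$;
\item $\{v_{i-1},v_i,x\}$ lies only in $f_{i-1}$ and $f_i$.
\end{itemize}
Inserting $x$ between $v_i$ and $v_{i+1}$ keeps $h_{i-2}=f_{i-1}$ and $h_{i+1}=f_{i+2}$. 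So the three new triples $\{v_{i-1},v_i,x\}$, $\{v_i,x,v_{i+1}\}$, $\{x,v_{i+1},v_{i+2}\}$ would all have to be served by $f_i$ and $f_{i+1}$ alone. Every position is therefore blocked, for every $t$. For $t\ge8$ this $H$ is not exceptional, so some other move must produce the Berge-$C_{t+1}^3$. The reroutings you sketch (using $\{v_{i-1},v_i,x,v_{i+1}\}$ in place of $h_{i-1}$, or inserting at an adjacent position) are again insertions into this cycle and fail as well.

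The paper's extra move is a segment reversal. It passes to the trace $H^3_{T\cup\{x\}}$ and calls $v_i$ red when $\{v_{i-1},v_i,v_{i+1},x\}\in E(H)$. It then shows two things:
\begin{itemize}
\item red vertices $v_i,v_{i+2},v_j,v_{j+2}$ with $j\notin\{i-3,\dots,i+2\}$ give a Berge-$C_{t+1}^3$ on the order $v_{i-1},v_i,x,v_j,v_{j-1},\dots,v_{i+1},v_{j+1},v_{j+2},\dots$ (Claim 2);
\item two consecutive blue vertices allow insertion after a local rearrangement (Claim 1).
\end{itemize}
These two moves settle all $t\ge7$, and $t\in\{5,6\}$ reduces to eight explicit colour patterns. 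Nothing in your proposal plays the role of the reversal.

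The remaining ingredients are asserted rather than proved. The classification of Berge-$C_5^3$-free hypergraphs for $5\le n\le 8$, and their nonexistence for $n\ge 9$, is left to an ``exhaustive case analysis'' that you do not carry out. Rephrasing it through Hall's theorem only restates the condition for a fixed cyclic order; it gives no way to choose the order or to arrive at $\mathcal{H}_5,H_6,H_7,H_8$. The same applies to your $t\ge6$, $n\le8$ cases and to the ``outside vertex yields an alternative'' step for $t=5$, $n\ge9$.

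The idea you are missing is the paper's trace cascade. Every $5$-vertex $3$-trace of a Berge-$C_5^3$-free $H$ with $\delta_3(H)\ge1$ is itself such a hypergraph, hence lies in $\mathcal{H}_5$. Counting $3$- and $4$-edges over all $(n-1)$-vertex traces then forces $H\cong H_6,H_7,H_8$ for $n=6,7,8$. For $n\ge9$, two $8$-sets sharing seven vertices cannot both have $3$-trace $H_8$, which rules this case out.

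Finally, the reduction to edge-minimal $H$ is not valid for a statement with exceptions. The hypergraph with edges $\{1,2,3,4\},\{1,2,3\},\{1,2,4\},\{1,3,4\}$ contains a Berge-$C_4^3$ but becomes $H_4$ after deleting its $3$-edges. Similarly, a non-minimal $H$ can collapse to a member of $\mathcal{H}_5$. This is why the paper separately proves that adding any $3$-edge to a member of $\mathcal{H}_5$ creates a Berge-$C_5^3$.
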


We remark that under the condition $\delta_3(H) \geq 1$, it is sufficient to consider $\{3, 4\}$-graphs rather than $[4]$-graphs. Thus Theorem \ref{T} implies that every $[4]$-graph $H$ on $n \geq 9$ vertices with $\delta_3(H) \geq 1$ contains a Berge-$C_t^3$ for all $4 \leq t \leq n$. For $4 \leq n \leq 8$, all the counterexamples in a $[4]$-graph are obtained by adding arbitrary $1$-hyperedges and $2$-hyperedges to a hypergraph $H$ from Constructions \ref{Construction1}--\ref{Construction3}. When $H$ is a $4$-graph, we immediately obtain the following corollary from Theorem \ref{T}.

\begin{corollary}\label{TC}
Every $4$-graph $H$ on $n \geq 4$ vertices with $\delta_3(H) \geq 1$ contains a Berge-$C_t^3$ for all $4 \leq t \leq n$, unless $H \in \{H_4, H_5''', H_8\}$.
\end{corollary}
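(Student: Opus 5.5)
The statement to prove is Corollary~\ref{TC}, and it follows from Theorem~\ref{T} by checking which exceptional hypergraphs are $4$-graphs. A $4$-graph is in particular a $\{3,4\}$-graph, so every $4$-graph $H$ on $n\ge 4$ vertices with $\delta_3(H)\ge 1$ falls under Theorem~\ref{T}. Hence $H$ contains a Berge-$C_t^3$ for all $4\le t\le n$, unless $H$ is one of the listed exceptions, namely a member of $\mathcal{H}_4$ (failing at $t=4$) or of $\mathcal{H}_5\cup\{H_6,H_7,H_8\}$ (failing at $t=5$). The plan is therefore to determine which of these exceptional hypergraphs are $4$-uniform. For those, we also confirm that they genuinely fail, which Theorem~\ref{T} already records.

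We go through Constructions~\ref{Construction1}--\ref{Construction3} one by one.

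In $\mathcal{H}_4$:
\begin{itemize}
\item $H_4$ has only the $4$-hyperedge $\{1,2,3,4\}$, so it is a $4$-graph.
\item $H_4'$ and $H_4''$ contain the $3$-hyperedge $\{1,2,3\}$, so they are not $4$-graphs.
\end{itemize}

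In $\mathcal{H}_5$:
\begin{itemize}
\item $H_5$ contains the $3$-hyperedges $\{i,j,5\}$.
\item $H_5'$ contains $\{1,4,5\}$.
\item $H_5''$ contains $\{3,4,5\}$.
\item $H_5'''$ consists solely of $4$-sets, so it is a $4$-graph.
\end{itemize}

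The remaining three:
\begin{itemize}
\item $H_6$ contains the $3$-hyperedges $\{i,j,k\}$, so it is not a $4$-graph.
\item $H_7$ contains the Fano lines $e$, which have size $3$, so it is not a $4$-graph.
\item In $H_8$, every hyperedge is either $e\cup\{8\}$ or $[7]\setminus e$. Both have size $4$ because $|e|=3$. So $H_8$ is a $4$-graph.
\end{itemize}

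Thus the only $4$-uniform exceptions are $H_4$, $H_5'''$ and $H_8$. For completeness, we note that each of them satisfies $\delta_3\ge1$ (implicit in Theorem~\ref{T}, which lists them as counterexamples). As a sanity check for $H_8$: any triple inside $[7]$ is either a Fano line $e$, covered by $e\cup\{8\}$, or meets the complement of some line appropriately. Any triple containing $8$ is $\{8,a,b\}$ with $\{a,b\}$ in a unique line.

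There is no real obstacle in this argument. The only point needing care is reading off hyperedge sizes correctly in each construction, especially confirming that $|[7]\setminus e|=4$ for every Fano line $e$.
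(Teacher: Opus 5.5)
Your proposal is correct and is exactly the paper's argument: since a $4$-graph is a $\{3,4\}$-graph, Theorem~\ref{T} applies, and among the exceptional hypergraphs of Constructions~\ref{Construction1}--\ref{Construction3} only $H_4$, $H_5'''$ and $H_8$ contain no $3$-hyperedge. Your extra check that $\delta_3(H_8)\ge 1$ is not needed for the stated implication; if you keep it, replace ``meets the complement of some line appropriately'' by the precise fact that a non-collinear triple of the Fano plane is disjoint from exactly one line, and so lies in that line's complement.
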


\section{Proof of Theorem \ref{T0}}\label{S2}
For a $[k]$-graph $H = (V, E)$ and a subset $S \subseteq V$, the \emph{$c$-trace} of $H$ on $S$ is the $[k]$-graph $H_S^c = (S, E')$, where the vertex set is $S$ and the hyperedge set $E'$ is defined as $E' = \{h \cap S: h \in E(H), |h \cap S| \geq c\}$. Trace operations play a crucial role in extremal problems involving non-uniform hypergraphs. The following propositions can be readily verified by definition (see \cite{LW}).

\begin{proposition}\label{P}
Let $H$ be a $[k]$-graph and $S \subseteq V(H)$. Then the following statements hold:
\begin{mathitem}
\item[$(1)$] If $\delta_c(H) \geq 1$ and $|S| \geq c$, then $\delta_c(H_S^c) \geq 1$.
\item[$(2)$] Each Berge-$C_t^r$ in $H_S^c$ corresponds to a Berge-$C_t^r$ in $H$ with the same defining vertices.
\end{mathitem}
\end{proposition}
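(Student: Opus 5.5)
Both parts follow by unwinding the definitions of the $c$-trace and of a Berge-$C_t^r$. The only genuine point is injectivity of the hyperedge assignment in (2). Throughout, write $H_S^c = (S,E')$ with $E' = \{h\cap S : h\in E(H),\ |h\cap S|\ge c\}$. We regard $E'$ as a multiset, or equivalently we fix for each $e'\in E'$ an original hyperedge $\phi(e')\in E(H)$ with $\phi(e')\cap S = e'$, chosen so that distinct trace hyperedges receive distinct originals. This is possible because every trace hyperedge arises from at least one hyperedge of $H$. If several hyperedges of $H$ have the same trace, either convention works: as a set, $E'$ contains that trace only once, so we may pick any one original; as a multiset, each copy gets its own original.

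For (1), let $T\subseteq S$ with $|T| = c$, which exists since $|S|\ge c$. Since $\delta_c(H)\ge 1$, some $h\in E(H)$ contains $T$. Then $T\subseteq h\cap S$, so $|h\cap S|\ge |T| = c$, and therefore $h\cap S\in E'$ and it contains $T$. Hence $d_{H_S^c}(T)\ge 1$ for every $c$-subset $T$ of $S$, that is, $\delta_c(H_S^c)\ge 1$.

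For (2), take a Berge-$C_t^r$ in $H_S^c$. It consists of distinct vertices $v_1,\dots,v_t\in S$ and distinct hyperedges $h'_1,\dots,h'_t\in E'$ with $\{v_i,\dots,v_{i+r-1}\}\subseteq h'_i$ for each $i$, indices taken modulo $t$. Set $h_i = \phi(h'_i)\in E(H)$. Then $h_i\supseteq h_i\cap S = h'_i \supseteq \{v_i,\dots,v_{i+r-1}\}$, so each required $r$-set is still embedded. The $h_i$ are pairwise distinct: if $h_i = h_j$, then $h'_i = h_i\cap S = h_j\cap S = h'_j$, which forces $i=j$. The vertices are unchanged, so $v_1,\dots,v_t$ together with $h_1,\dots,h_t$ form a Berge-$C_t^r$ in $H$ with the same defining vertices.

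The step needing care is this distinctness of the $h_i$. It is exactly why the correspondence is made through a fixed injective choice of originals $\phi$ (or the multiset convention) rather than through the trace hyperedges alone. Everything else is immediate from the definitions.
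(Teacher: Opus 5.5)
Your proof is correct and is exactly the routine definition-unwinding the paper has in mind; the paper itself gives no details and cites Lu--Wang. One cosmetic remark: under the paper's set convention, distinctness of the lifted hyperedges is automatic because $h_i\cap S\neq h_j\cap S$ forces $h_i\neq h_j$, whereas under your multiset convention it is the injectivity of $\phi$, not the comparison of the sets $h_i\cap S$, that yields $i=j$.
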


A \emph{hyperedge-coloring} of a hypergraph $H$ is defined as a mapping $\phi: E(H) \to \mathbb{N}$, where $\mathbb{N}$ denotes the set of natural numbers. We call a hypergraph $H$ with a hyperedge-coloring $\phi$ a \emph{hyperedge-colored hypergraph}. A hyperedge-colored hypergraph $F$ is said to be \emph{rainbow} if each hyperedge in $E(F)$ is assigned a distinct color. We say that a hyperedge-coloring $\phi$ is \emph{$r$-bounded} if each color is used at most $r$ times. Dudek et al. \cite{DFR} initiated the study of rainbow Hamilton cycles in complete $r$-graphs using the Lov\'asz local lemma. These results were later strengthened by Dudek and Ferrara \cite{DF}.

\begin{theorem}[Dudek, Frieze and Ruci\'nski \cite{DFR}]\label{T3}
Given integers $\ell$, $r$ with $1 \leq \ell < r$, we can find constants $n_0 = n_0(r, \ell)$ and $c = c(r, \ell)$ such that if $n \geq n_0$ and $r - \ell$ divides $n$, then every $cn^{r-\ell}$-bounded hyperedge-coloring of $K_n^r$ contains a rainbow copy of $C_n^{r, \ell}$.
\end{theorem}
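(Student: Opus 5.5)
The plan is to choose a uniformly random cyclic ordering of $V(K_n^r)=[n]$ and apply the lopsided Lov\'asz local lemma in the space of random permutations, as developed by Erd\H{o}s and Spencer and formalized by Lu and Sz\'ekely. Let $m=n/(r-\ell)$. Fix the ``positions'' $1,\dots,n$ on a cycle, and let $I_1,\dots,I_m$ be the position blocks of a $C_n^{r,\ell}$: each $I_i$ consists of $r$ consecutive positions, $I_{i+1}$ starts $r-\ell$ positions after $I_i$, and consecutive blocks share exactly $\ell$ positions. A uniformly random bijection $\pi$ from positions to $[n]$ yields a copy of $C_n^{r,\ell}$ with hyperedges $\pi(I_1),\dots,\pi(I_m)$. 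This copy is rainbow unless some two of its hyperedges receive the same colour.

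\textbf{Bad events.} These are \emph{canonical} in the Lu--Sz\'ekely sense, i.e.\ they prescribe $\pi$ on a set of positions. For slots $i\neq j$, an injective assignment $\sigma$ of vertices to $I_i\cup I_j$, and the condition $\phi(\sigma(I_i))=\phi(\sigma(I_j))$, let $A_{i,j,\sigma}$ be the event that $\pi$ agrees with $\sigma$ on $I_i\cup I_j$. The cycle is rainbow iff no $A_{i,j,\sigma}$ occurs. Two canonical events that are \emph{compatible} (they agree on common positions and do not use a vertex twice) form a negative-dependency-graph non-edge. So it suffices that each event conflicts with events of total probability at most $1/4$, while each event itself has tiny probability. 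Then the symmetric lopsided LLL, with $x_A=2\Pr(A)$ or simply the condition $e\sum\Pr\le 1$, gives $\Pr(\text{no bad event})>0$.

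\textbf{Counting.} We have $\Pr(A_{i,j,\sigma})=1/(n)_{|I_i\cup I_j|}\approx n^{-|I_i\cup I_j|}$. There are two kinds of pair $(i,j)$.
\begin{itemize}
\item \emph{Non-overlapping blocks.} Here $|I_i\cup I_j|=2r$. An event conflicting with a given one through a fixed position $p$ can be counted as follows. Choose a slot containing $p$ ($O(1)$ ways) and the other slot ($\le m$ ways). Choose the ordered image of the first block ($\le n^r$ ways) and a same-coloured hyperedge for the second block ($\le cn^{r-\ell}\cdot r!$ ways, by $cn^{r-\ell}$-boundedness). This contributes $O\bigl(m\cdot n^r\cdot cn^{r-\ell}\cdot n^{-2r}\bigr)=O(c\,n^{1-\ell})=O(c)$.
\item \emph{Overlapping blocks.} The only overlapping pairs are consecutive slots, with $|I_i\cup I_j|=2r-\ell$, and the two hyperedges share $\ell$ prescribed vertices. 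Hence the second hyperedge has at most $\min\{n^{r-\ell},cn^{r-\ell}\}$ choices, giving $O\bigl(n^r\cdot cn^{r-\ell}\cdot n^{-(2r-\ell)}\bigr)=O(c)$.
\end{itemize}
Conflicts through a shared \emph{vertex} rather than a shared position are handled the same way. The fixed vertex may sit at any position of any slot, but this costs a factor $m\cdot O(1)$ while saving a factor $n$ in the choice of the hyperedge containing it. Multiplying everything by the $O(1)$ positions and vertices of the given event, the total conflict weight is at most $C(r,\ell)\,c$. Choosing $c=c(r,\ell)$ small enough, for instance $C(r,\ell)c<1/(4e)$, and $n\ge n_0$ makes the LLL condition hold.

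\textbf{Main obstacle.} I expect the main difficulty to be the bookkeeping for overlapping slots and for general $\ell$. One must check that every configuration type has conflict weight $O(c)$ and none has $O(c\,n^{\epsilon})$. The boundedness $cn^{r-\ell}$ is exactly what balances the $n^{\ell}$ loss when two blocks share $\ell$ positions. I would organize the count by the pattern of intersection $|I_i\cap I_j|\in\{0,\ell\}$ and by whether the conflict occurs at a position or at a vertex. I would also verify that the Lu--Sz\'ekely lopsided framework applies to canonical events on the symmetric group, which handles the fact that the events are not truly independent.
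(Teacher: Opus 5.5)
The paper does not prove this theorem; it quotes it from Dudek, Frieze and Ruci\'nski. Your plan (uniformly random cyclic ordering, canonical bad events $A_{i,j,\sigma}$, and the lopsided local lemma for random permutations in the Erd\H{o}s--Spencer/Lu--Sz\'ekely form) is the local-lemma argument the paper attributes to that source, and the counting goes through once one point is fixed.

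Your claim that only consecutive slots overlap is false in general. Slots at cyclic distance $k$ share $\max\{0,\,r-k(r-\ell)\}$ positions, so for $2\ell>r$ non-consecutive blocks also intersect. In the tight case $\ell=r-1$, which is the case used in Lemma~\ref{L21}, $|I_i\cap I_{i+k}|=r-k$ for every $1\le k\le r-1$. Organising the count by $|I_i\cap I_j|\in\{0,\ell\}$ therefore omits cases.

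The repair uses your own method. Take a pair of slots with $|I_i\cap I_j|=s$, where $1\le s\le \ell$.
\begin{itemize}
\item For a conflict through a fixed position, there are $O(1)$ choices of the two slots. There are at most $n^r$ ordered images for the slot containing that position, and at most $r!\,cn^{r-\ell}$ for the other slot, since it must be a hyperedge of the same colour through the $s$ shared vertices. Against probability $1/(n)_{2r-s}$, this totals $O(cn^{s-\ell})$.
\item For a conflict through a fixed vertex, there are $O(m)$ slot pairs but only $n^{r-1}$ images for the slot containing the vertex. This again totals $O(cn^{s-\ell})$.
\end{itemize}
So $s=\ell$ contributes $O(c)$ and $1\le s<\ell$ contributes $o(1)$. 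The total conflict weight is still $O(c)+o(1)\le 1/4$ for $c$ small and $n$ large, and the local lemma with $x_B=2\Pr(B)$ finishes the proof.
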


\begin{lemma}\label{L21}
For a positive integer $r$ and a set $R \subseteq [k]$ with $k \geq 2$, there exists an integer $N = N(k, r)$ such that every $R$-graph $H$ on $n \geq N$ vertices with $\delta_r(H) \geq 1$ contains a Berge-$C_n^r$.
\end{lemma}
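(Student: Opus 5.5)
Since $\delta_r(H)\ge 1$, every $r$-subset $S\subseteq V$ lies in some hyperedge of $H$. The plan is to turn this into a coloring of $K_n^r$ and apply Theorem \ref{T3} with $\ell=r-1$. The divisibility condition $r-\ell=1$ divides $n$ holds for every $n$, so the bound-parameter is $cn^{r-\ell}=cn$.

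Note first that $\delta_r(H)\ge 1$ forces $H$ to contain hyperedges of size at least $r$, so $r\le k$. The degenerate cases are immediate: if $r=1$ the statement only needs a Berge cycle on $n$ vertices with distinct hyperedges, and the argument below covers it as well. If $r=k$, then every $r$-set is itself a hyperedge, so $K_n^r\subseteq H$ and $C_n^r$ itself is a Berge-$C_n^r$.

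\textbf{Coloring.} For each $r$-subset $S\subseteq V(H)$, choose one hyperedge $h_S\in E(H)$ with $S\subseteq h_S$. Define the coloring $\phi$ of $K_n^r$ on $V(H)$ by $\phi(S)=h_S$, using the hyperedges of $H$ themselves as colors (indexed by natural numbers). A color $h$ is used only on $r$-subsets of $h$, and $|h|\le k$, so each color appears at most $\binom{k}{r}$ times. Thus $\phi$ is $\binom{k}{r}$-bounded.

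\textbf{Applying Theorem \ref{T3}.} Take $\ell=r-1$ (assuming $r\ge 2$; the case $r=1$ is handled below) and let $n_0(r,r-1)$ and $c=c(r,r-1)$ be the constants of Theorem \ref{T3}. Choose $N\ge n_0$ with $cN\ge\binom{k}{r}$. Then for $n\ge N$ the coloring $\phi$ is $cn$-bounded, so $K_n^r$ contains a rainbow tight Hamilton cycle $C_n^r$. Write it as $v_1,\dots,v_n$ with edges $S_i=\{v_i,\dots,v_{i+r-1}\}$. Rainbow means the hyperedges $h_{S_i}$ are pairwise distinct, and by construction $S_i\subseteq h_{S_i}$. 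Hence $\{v_i\}$ and $\{h_{S_i}\}$ form a Berge-$C_n^r$ in $H$ (it has $n\ge 3$ hyperedges).

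\textbf{The case $r=1$.} Theorem \ref{T3} requires $\ell\ge 1$, so this case needs separate treatment. It asks for distinct vertices $v_i$ with distinct hyperedges $h_i\ni v_i$, i.e.\ a system of distinct representatives. Here I would instead apply the argument to $r'=2$: since $\delta_2\ge1$ need not hold, I would rather use Hall's theorem directly on the bipartite incidence graph between $V$ and $E$. Every vertex has degree at least $1$, and each hyperedge contains at most $k$ vertices. Hall's condition for matching $V$ into $E$ may fail, for example if $H$ has a single hyperedge. This shows the lemma as literally stated needs $r\ge2$, which is presumably the intended regime given Theorem \ref{T0} has $t\ge r+1$. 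The main point to check is therefore only this boundary issue and the constant bookkeeping $cN\ge\binom{k}{r}$. The core step is the observation that bounded hyperedge size makes the induced coloring $O(1)$-bounded, which is far below the $cn$ threshold.
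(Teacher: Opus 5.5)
Your argument is the paper's proof: color each $r$-set of $K_n^r$ by a hyperedge of $H$ containing it, note that the coloring is $\binom{k}{r}$-bounded, and apply Theorem~\ref{T3} with $\ell=r-1$; your explicit choice $cN\ge\binom{k}{r}$ spells out a step the paper leaves implicit. Your remark that $r\ge 2$ is implicitly needed is right, since the paper also needs $\ell=r-1\ge 1$ for Theorem~\ref{T3}. Two fixes to that remark: the single-hyperedge example fails for $n>k$, so instead cover $V$ by $\lceil n/s\rceil<n$ hyperedges of some size $s\in R$ with $s\ge 2$; and drop your earlier, contradictory claim that the main argument also covers $r=1$.
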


\begin{proof}
Let $E(H) = \{e_1, e_2, \dots, e_m\}$. We construct a hyperedge-coloring $\phi$ of the auxiliary $r$-graph $G = K_n^r$ from $H$ by assigning to each hyperedge $h \in E(G)$ the color $c_i$ when $h \subseteq e_i$ for some $i$, and choosing a color arbitrarily if $h$ is contained in multiple hyperedges of $H$. Observe that $\phi$ is $\binom{k}{r}$-bounded. By Theorem \ref{T3}, there exists an integer $N$ such that $G$ contains a rainbow copy of $C_n^r$ for $n \geq N$. This rainbow copy corresponds to a Berge-$C_n^r$ in $H$, where each hyperedge $h \in E(C_n^r)$ is embedded into $e_i$ whenever $h$ is colored $c_i$.
\end{proof}

\begin{lemma}\label{L22}
Given positive integers $r < s$ and a subset $R\subseteq [k]$ with $k \geq 2$, there exists an integer $n_0 = n_0(k, r, s)$ such that every $R$-graph $H$ on $n \geq n_0$ vertices with $\delta_r(H) \geq 1$ contains a Berge-$K_s^r$.
\end{lemma}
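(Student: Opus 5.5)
We plan to reduce Lemma~\ref{L22} to a Ramsey-type statement about hyperedge-colorings of the complete $r$-graph, in the same spirit as the proof of Lemma~\ref{L21}. As there, let $E(H)=\{e_1,\dots,e_m\}$ and form the auxiliary complete $r$-graph $G=K_n^r$ on $V(H)$. Since $\delta_r(H)\geq 1$, every $r$-set $h\in E(G)$ lies in some $e_i$, and we color $h$ by $c_i$ for one such $i$. Every color class consists of $r$-subsets of a single hyperedge of size at most $k$, so $\phi$ is $\binom{k}{r}$-bounded. It then suffices to find a rainbow copy of $K_s^r$ in $G$. Indeed, if the $\binom{s}{r}$ hyperedges of such a copy receive distinct colors $c_{i_1},\dots,c_{i_{\binom sr}}$, then embedding each $r$-set into the corresponding hyperedge $e_{i_j}$ of $H$ gives a bijection between $E(K_s^r)$ and a set of distinct hyperedges of $H$. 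Together with the identity injection on the $s$ vertices, this is a Berge-$K_s^r$.

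To obtain the rainbow clique, we would use a direct counting (deletion) argument rather than a deep theorem. Call an $s$-subset $S\subseteq V(H)$ \emph{bad} if two distinct $r$-subsets of $S$ receive the same color. Any two such $r$-sets $A\neq B$ lie in a common hyperedge $e_i$, so $|A\cup B|\leq k$ and $A\cup B$ has between $r+1$ and $\min\{2r,k\}$ vertices. We count bad $s$-sets by first choosing the union $U=A\cup B$, which must lie inside some hyperedge.

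The key step is to bound the number of possible sets $U$ of size $u$ by $O(n^{r})$ rather than $O(n^u)$. This needs a bound on $m$. To get one, take a minimal subfamily of hyperedges covering all $r$-sets; we may pass to it, since a Berge-$K_s^r$ in a subhypergraph is still one in $H$, and the minimum $r$-degree condition is preserved. In a minimal cover each hyperedge covers an $r$-set that no other hyperedge covers, so $m\leq\binom nr$. Each hyperedge contains at most $2^k$ candidate sets $U$. Hence the number of bad $s$-sets is at most $\binom nr 2^k\binom{n}{s-r-1}=O_{k,r,s}(n^{s-1})$, because $|U|\ge r+1$ leaves at most $s-r-1$ further vertices to choose. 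This is $o\left(\binom ns\right)$, so for $n\geq n_0(k,r,s)$ some $s$-set is good. Its $\binom sr$ $r$-subsets then all carry distinct colors, giving the required rainbow $K_s^r$.

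The main point to check is exactly this minimal-cover reduction: without it, $m$ could be as large as $2^n$, and the count of pairs $(A,B)$ sharing a color would not be controlled. Taking a minimal cover, or equivalently fixing the coloring first and counting only the color classes actually used (at most $\binom nr$ of them, each of size at most $\binom kr$), resolves this. With the coloring viewpoint the count is even simpler: there are at most $\binom nr\binom kr^2$ monochromatic pairs $\{A,B\}$, and each lies in at most $\binom{n}{s-r-1}$ sets $S$. So we will present it that way and avoid the reduction altogether.
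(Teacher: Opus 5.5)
Your proposal is correct and is essentially the paper's argument. The paper likewise assumes $H$ is hyperedge-minimal to get $m\le\binom{n}{r}$, union-bounds over a random $s$-set $S$ the events $|e_i\cap S|\ge r+1$ (each of probability at most $\binom{k}{r+1}\binom{n-r-1}{s-r-1}/\binom{n}{s}$), and concludes that distinct $r$-subsets of a good $S$ lie in distinct hyperedges. Your color-collision count is the same first-moment computation: the used color classes play the role of the minimal cover, and you only require that $S$ be rainbow rather than that every hyperedge meet $S$ in at most $r$ vertices.
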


\begin{proof}
We assume that $H$ is hyperedge-minimal with respect to the condition $\delta_r(H) \geq 1$. Let $E(H) = \{e_1, e_2, \dots, e_m\}$. Since $\delta_r(H) \geq 1$, every $r$-vertex subset of $V(H)$ is contained in some hyperedge of $H$. Thus, we may assign a hyperedge to each $r$-vertex subset, which implies that $m \leq \binom{n}{r}$.

Let $S \subseteq V$ be a uniformly random subset of $V$ with $|S|=s$. For each $i \in [m]$, we use $B_i$ to denote the event $|e_i \cap S| \geq r+1$. It is straightforward to verify that
$$\Pr(B_i) \leq \frac{\binom{|e_i|}{r+1}\binom{n-r-1}{s-r-1}}{\binom{n}{s}} \leq \frac{\binom{k}{r+1}\binom{n-r-1}{s-r-1}}{\binom{n}{s}}.$$
Applying a union bound over all $B_i$, we get
$$\Pr(B_1 \vee \dots \vee B_m) \leq \sum_{i=1}^m \Pr(B_i) \leq m \frac{\binom{k}{r+1}\binom{n-r-1}{s-r-1}}{\binom{n}{s}} \leq \binom{n}{r} \frac{\binom{k}{r+1}\binom{n-r-1}{s-r-1}}{\binom{n}{s}} = o(1)$$
as $n \to \infty$. Thus, there exists an integer $n_0 = n_0(k, r, s)$ such that for any fixed $n \geq n_0$, there is a set $S$ satisfying $|e_i \cap S| \leq r$ for every $i \in [m]$.

Next, we claim that $H$ contains a Berge-$K_s^r$. Indeed, since $\delta_r(H) \geq 1$, for every hyperedge $e \in \binom{S}{r}$, there exists a hyperedge $g(e) \in E(H)$ such that $e \subseteq g(e)$. Furthermore, for any distinct $h_1, h_2 \in \binom{S}{r}$, we have $g(h_1) \neq g(h_2)$ because both $|g(h_1) \cap S|$ and $|g(h_2) \cap S|$ are at most $r$.
We obtain a Berge-$K_s^r$ in $H_S^r$, which implies that $H$ contains a Berge-$K_s^r$ with the same defining vertices by Proposition \ref{P}.
\end{proof}

\begin{proof}[Proof of Theorem \ref{T0}]
By Lemma \ref{L21}, there exists an integer $N = N(k, r)$ such that every $R$-graph $H$ on $n \geq N$ vertices with $\delta_r(H) \geq 1$ contains a Berge-$C_n^r$. Similarly, Lemma \ref{L22} implies that there exists an integer $n_0 = n_0(k, r, N)$ such that every $R$-graph $H$ on $n \geq n_0$ vertices with $\delta_r(H) \geq 1$ contains a Berge-$K_N^r$. Note that $n_0 \geq N$. Now let $H$ be an $R$-graph on $n \geq n_0$ vertices with $\delta_r(H) \geq 1$. Since $H$ contains a Berge-$K_N^r$, it follows that $H$ contains a Berge-$C_t^r$ for all $r+1 \leq t \leq N$. Moreover, for every subset $S \subseteq V(H)$ with $N+1 \leq |S| \leq n$, we have $\delta_r(H_S^r) \geq 1$ by Proposition \ref{P}. Thus the $r$-trace hypergraph $H_S^r$ contains a Berge-$C_{|S|}^r$. By Proposition \ref{P}, $H$ contains a Berge-$C_t^r$ for all $N+1 \leq t \leq n$. Consequently, every such $R$-graph $H$ contains a Berge-$C_t^r$ for every integer $t$ with $r+1 \leq t \leq n$.
\end{proof}

\section{Proof of Theorem \ref{T}}\label{S3}
Before proving Theorem \ref{T}, we first establish several auxiliary lemmas. Lemma \ref{L10} shows that the hypergraph given in Constructions \ref{Construction1}--\ref{Construction3} contains neither a Berge-$C_4^3$ nor a Berge-$C_5^3$. Furthermore, Lemmas \ref{L11}--\ref{L12} imply that $H$ always contains a Berge-$C_4^3$ for $n \geq 4$ and a Berge-$C_5^3$ for $n \geq 5$, except for the exceptional cases mentioned above. In addition, Lemma \ref{L} provides a key technical tool for extending the cycle length.

Recall that a Berge-$C_t^3$ is defined by a vertex sequence $v_1, v_2, \dots, v_t$ and a hyperedge sequence $h_1, h_2, \dots, h_t$ such that $\{v_i, v_{i+1}, v_{i+2}\}\subseteq h_i$ for all $i \in [t]$, where indices are taken modulo $t$.

\begin{lemma}\label{L10}
Every $H \in \mathcal{H}_4$ is Berge-$C_4^3$-free and every $H \in \mathcal{H}_5 \cup \{H_6, H_7, H_8\}$ is Berge-$C_5^3$-free.
\end{lemma}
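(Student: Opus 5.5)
The plan is to count edges and exploit the rigid structure of a Berge-$C_t^3$ when $t\in\{4,5\}$. In a Berge-$C_4^3$ on $v_1,\dots,v_4$, the four embedded triples are exactly the four $3$-subsets of $\{v_1,v_2,v_3,v_4\}$, so we need four distinct hyperedges, each containing a different triple. In a Berge-$C_5^3$, the five triples $\{v_i,v_{i+1},v_{i+2}\}$ are five of the ten triples of a $5$-set; their complements are the five ``non-consecutive'' pairs $\{v_{i+3},v_{i+4}\}$, i.e.\ the edges of a $5$-cycle on the defining set. We need five distinct hyperedges. Also, a $4$-hyperedge contains at most $\binom{|h\cap S|}{3}$ of the triples, and distinctness is the only real constraint. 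So I will argue via a system of distinct representatives (Hall's condition), with the triples to be embedded on one side and the hyperedges on the other.

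For $\mathcal{H}_4$ the claim is immediate: each graph has at most three hyperedges, but a Berge-$C_4^3$ needs four. For $\mathcal{H}_5$, $H_5'''$ and $H_5''$ have four edges, which is fewer than five, so they are done. For $H_5$ (edges $1234$ and $\{i,j,5\}$), the five triples of any $5$-cycle structure on $[5]$ contain at least three triples avoiding vertex $5$? Check this directly: the triples avoiding $5$ correspond to complement pairs containing $5$, and in a $5$-cycle exactly two edges meet vertex $5$. So exactly two of the chosen triples avoid $5$, and both can only be embedded in $1234$, which is a contradiction. For $H_5'$, triples avoiding $5$ lie only in $1234$, and triples avoiding $4$ lie only in $1235$. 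Each of the two $5$-cycle edges at vertex $4$ forces a triple avoiding $4$, and likewise for vertex $5$; if $45$ is an edge, this gives one triple avoiding both, plus others. In every case we get two triples forced into the same single edge, unless the cycle avoids the edge $45$, in which case two triples avoid $5$. So we always get a contradiction.

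For $H_6,H_7,H_8$ we take any $5$-set $S$ of defining vertices and use the trace. We need five distinct hyperedges meeting $S$ in at least $3$ vertices, whose traces realize the required triples. For $H_6$, a $5$-set misses one vertex, say $6$ by symmetry. The traces are then $1234$ (a $4$-set), $125$, $345$, and $\{i,j,5\}$ with $i\in\{1,2\}$, $j\in\{3,4\}$. Triples avoiding $5$ must use $1234$, and exactly two triples avoid $5$, which is a contradiction. For $H_7$, the $3$-edges are Fano lines and the $4$-edges are their complements. A $5$-set $S$ misses a pair $\{a,b\}$, which lies on a unique line $L$. Lines through exactly one of $a,b$ meet $S$ in $2$ vertices, so they are useless, and their complements contain the other point of $\{a,b\}$. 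I will then enumerate the useful traces: complements of lines containing neither $a$ nor $b$ (four lines, whose complements contain $a$ and $b$, so their traces are $2$-sets), the trace of $L$ (one point), and the complement of $L$ (the remaining $4$ points of $S$). This leaves lines avoiding $\{a,b\}$, which lie inside $S$ and give $3$-sets, together with the complement of $L$. So the usable hyperedges are four Fano-line triples plus one $4$-set; that is five hyperedges, and we must check that the four lines avoiding $a$ and $b$ restricted to $S$, together with $S\setminus L$, do not cover a $5$-cycle pattern. This is the main obstacle: a short finite check using Fano geometry, namely that the four triples are pairwise intersecting in one point and cannot all be consecutive triples of a $5$-cycle. $H_8$ is handled similarly by splitting on whether $8\in S$.
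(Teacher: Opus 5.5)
Your arguments for $\mathcal{H}_4$, $H_5''$, $H_5'''$, $H_5$ and $H_6$ are sound. The observation that exactly two of the five cycle triples avoid any fixed defining vertex neatly replaces the paper's symmetry reduction; it holds because the complementary pairs $\{v_{i+3},v_{i+4}\}$ (which are consecutive, not non-consecutive) form a $2$-regular graph on the defining set.

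There is one slip in $H_5'$. The triple $\{1,2,3\}$ lies in both $\{1,2,3,4\}$ and $\{1,2,3,5\}$, so ``triples avoiding $5$ lie only in $1234$'' is false as written. When $\{4,5\}$ is a complementary pair, the real contradiction is a pigeonhole: $\{1,2,3\}$, a triple lying only in $\{1,2,3,4\}$, and a triple lying only in $\{1,2,3,5\}$ would need three distinct hyperedges out of two.

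The genuine gap is in $H_7$ and $H_8$. Your count for $H_7$ is wrong: of the seven lines, one contains both $a,b$, four contain exactly one of them, and only two contain neither. More seriously, you discard the complements of the four lines through exactly one of $a,b$. Such a complement misses that point and contains the other, so its trace on $S$ is a $3$-set and the hyperedge is usable. The usable family actually has seven members:
the two lines through $c$ other than $L=\{a,b,c\}$;
the four $3$-traces just described, each containing $c$ (a line through $a$ or $b$ other than $L$ misses $c$);
and $S\setminus\{c\}=[7]\setminus L$.
This is a copy of $H_5$ with $c$ in the role of vertex $5$, not ``four lines plus one $4$-set''. So the finite check you propose targets the wrong configuration, and it is not carried out in any case. $H_8$ is not treated at all.

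What is missing is the paper's observation that in $H_6$, $H_7$, $H_8$ every triple lies in exactly one hyperedge: any two Fano lines meet, and a non-collinear triple of $[7]$ is disjoint from exactly one line. Combine this with the fact that for every $5$-set $S$ there is a vertex $x\in S$ such that $S\setminus\{x\}$ lies in a hyperedge $h_S$. Then both cycle triples avoiding $x$ can only be embedded in $h_S$, and your counting argument finishes exactly as for $H_5$. The choices of $x$ are as follows. For $H_7$, and for $H_8$ with $8\notin S$, take $x=c$ and $h_S=[7]\setminus L$. For $H_8$ with $8\in S$, let $U=[8]\setminus S$. If $U$ is a line, take $x=8$ and $h_S=[7]\setminus U$. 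Otherwise $[7]\setminus U=\ell\cup\{p\}$, where $\ell$ is the unique line disjoint from $U$, and take $x=p$, $h_S=\ell\cup\{8\}$. This is precisely the content of the paper's claim that every $5$-vertex $3$-trace of these hypergraphs is isomorphic to $H_5$.
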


\begin{proof}
We first consider $H \in \mathcal{H}_4$. Since a Berge-$C_4^3$ consists of four hyperedges and $H$ contains at most three hyperedges, $H$ is Berge-$C_4^3$-free. Similarly, $H_5''$ and $H_5'''$ are Berge-$C_5^3$-free. Now we suppose that $H_5$ contains a Berge-$C_5^3$. By symmetry among the vertices $1$, $2$, $3$, $4$, we may take its vertex sequence as $1, 2, 3, 4, 5$ and hyperedge sequence as $e_1, e_2, e_3, e_4, e_5$. This forces $e_1 = e_2 = \{1, 2, 3, 4\}$, a contradiction.

Next, we suppose that $H_5'$ contains a Berge-$C_5^3$ with the hyperedge sequence $e_1, e_2, e_3, e_4, e_5$. By symmetry among the vertices $1, 2, 3$ and the symmetry between vertex $4$ and vertex $5$, it suffices to consider the vertex sequences $1, 2, 3, 4, 5$ and $1, 2, 4, 3, 5$. For the vertex sequence $1, 2, 3, 4, 5$, we derive $e_2 = \{1, 2, 3, 4\}$ and $e_5 = \{1, 2, 3, 5\}$, which leaves no valid choice for $e_1$ and thus gives a contradiction. For the vertex sequence $1, 2, 4, 3, 5$, we obtain $e_5 = \{1, 2, 3, 5\}$, so no feasible hyperedge $e_4$ exists, a contradiction.

Finally, we take $H \in \{H_6, H_7, H_8\}$ and suppose $H$ contains a Berge-$C_5^3$ with defining vertices $S$. Since every triple of $V(H)$ is contained in exactly one hyperedge, $H_S^3$ also contains a Berge-$C_5^3$. However, we can check that any subset $S \subseteq V(H)$ with $|S| = 5$ satisfies $H_S^3 \cong H_5$, a contradiction.
\end{proof}

\begin{lemma}\label{L11}
Let $H$ be a $\{3, 4\}$-graph on $n \geq 4$ vertices. If $\delta_3(H) \geq 1$, then $H$ contains a Berge-$C_4^3$ unless $H \in \mathcal{H}_4$.
\end{lemma}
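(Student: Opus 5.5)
We first observe what a Berge-$C_4^3$ requires. On four vertices $v_1,v_2,v_3,v_4$ the four consecutive triples are exactly the four triples of $\{v_1,\dots,v_4\}$, and every cyclic order gives the same four triples. So $H$ contains a Berge-$C_4^3$ if and only if some $4$-set $S\subseteq V(H)$ admits a system of distinct representatives: each of the four triples of $S$ is assigned a distinct hyperedge of $H$ containing it. By Proposition \ref{P}, it suffices to find a $4$-set $S$ such that the trace $H_S^3$ (taken as a multiset of traces, keeping track of which original hyperedge each comes from) supports such an assignment. We apply Hall's theorem to the bipartite graph between the four triples of $S$ and the hyperedges $h$ with $|h\cap S|\ge 3$. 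A hyperedge with $|h\cap S|=3$ covers one triple, and one with $h\supseteq S$ covers all four. Since $\delta_3(H)\ge 1$, every triple has a neighbour. Hall's condition then fails only when the hyperedges meeting $S$ in at least three vertices are too few. Concretely, let $a$ be the number of hyperedges containing $S$ (so $a\le 1$, as each $4$-hyperedge is determined by its vertex set, assuming no multiple edges) and $b$ the number of distinct triples of $S$ that are themselves hyperedges or lie in $4$-hyperedges other than $S$. A matching exists if and only if $b=4$, or $a=1$ and $b\ge 3$. So $S$ is \emph{bad} exactly when $S\in E(H)$ and at most two of its triples are covered by other hyperedges, or $S\notin E(H)$ and some triple of $S$ is covered only by one hyperedge that also covers another triple, which is impossible unless that hyperedge is $S$ itself. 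Hence for $S\notin E(H)$, each triple of $S$ lies in a hyperedge $h\ne S$ with $|h\cap S|=3$. Such hyperedges are distinct for distinct triples, so $S$ is automatically good.

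So the proof reduces to this: either some $4$-set is not a hyperedge (and is then good), or every $4$-set is a $4$-hyperedge. In the second case, if $n\ge 5$, take $S=\{1,2,3,4\}\in E(H)$. Each triple $T\subseteq S$ together with a fifth vertex $x$ gives a $4$-hyperedge $T\cup\{x\}\ne S$, so $b=4$ and $S$ is good. Thus for $n\ge 5$ we always find a Berge-$C_4^3$. The point to double check is that the hyperedges assigned are distinct. This holds because a hyperedge $h\ne S$ meeting $S$ in three vertices meets it in exactly one triple.

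The remaining case is $n=4$, where the only $4$-set is $V(H)$. Here $H$ has a Berge-$C_4^3$ iff the matching exists, i.e. iff $H$ has at least four hyperedges, namely $[4]$ together with at least three of the triples, or all four triples. If $H$ fails, then the four triples are not all edges. Since $\delta_3\ge1$, the uncovered triples must lie in $[4]$, so $[4]\in E(H)$ and at most two triples are edges. Up to isomorphism this gives exactly $H_4,H_4',H_4''$, so $H\in\mathcal{H}_4$. The main obstacle is only the careful bookkeeping of distinctness in Hall's condition. The rest is routine.
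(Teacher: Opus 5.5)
Your proof is correct, and for $n\ge 5$ it takes a genuinely different and much shorter route than the paper.

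The paper first handles the case that $H$ is a $3$-graph, where $H=K_n^3$. Otherwise it fixes a $4$-hyperedge $h=\{v_1,v_2,v_3,v_4\}$ and a fifth vertex $v_5$. It then runs a case analysis on hyperedges $h'\supseteq\{v_1,v_2,v_5\}$ and $h''\supseteq\{v_3,v_4,v_5\}$, asking whether $h''$ meets $\{v_1,v_2\}$ and whether $h'$ meets $\{v_3,v_4\}$. In each branch it writes down an explicit vertex and hyperedge sequence.

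Your key observation makes all of that unnecessary. In a $\{3,4\}$-graph, any $4$-set $S\notin E(H)$ is automatically the vertex set of a Berge-$C_4^3$: every hyperedge through a triple $T\subset S$ must meet $S$ in exactly $T$, so the four covering hyperedges are forced to be distinct. Hence either some $4$-set is a non-edge, or every $4$-set is an edge and the hyperedges $T\cup\{v_5\}$ do the job. This also shows transparently why $n=4$ is the only exceptional order. It carries over essentially verbatim to Berge-$C_{r+1}^r$ in $R$-graphs with $R\subseteq[r+1]$ and $\delta_r\ge 1$, whereas the paper's case analysis is tailored to $r=3$. For $n=4$ your argument coincides with the paper's: at least four hyperedges give the cycle, and otherwise $[4]\in E(H)$ with at most two $3$-hyperedges.

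Two presentational remarks. The Hall's theorem framing and the appeal to Proposition~\ref{P} are not needed, since the definition of a Berge-$C_4^3$ directly asks for four distinct hyperedges covering the four triples of a $4$-set. Also, your sentence describing when $S\notin E(H)$ is bad is muddled, although the conclusion you draw from it (every such $S$ is good, since then $a=0$ and $b=4$) is right.
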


\begin{proof}
Let $V(H) = \{v_i: i \in [n]\}$. For $n = 4$, the hyperedges that may appear in $H$ are $e_0 = \{v_1, v_2, v_3, v_4\}$ and $e_i = e_0 \setminus \{v_i\}$ for each $i \in [4]$. If $H$ contains at least four hyperedges, then the vertex sequence $v_1, v_2, v_3, v_4$ and the hyperedge sequence $e_4', e_1', e_2', e_3'$ form a Berge-$C_4^3$, where $e_i' = e_i$ if $e_i \in E(H)$ and $e_i' = e_0$ otherwise. Hence, $H$ contains at most three hyperedges. Now we assert that $H$ must be Berge-$C_4^3$-free, which follows from the fact that a Berge-$C_4^3$ consists of four distinct hyperedges. Note that a $3$-hyperedge contains one triple, while a $4$-hyperedge contains four triples. Since there are $\binom{4}{3} = 4$ triples on $V(H)$ and $\delta_3(H) \geq 1$, we conclude that $e_0 \in E(H)$. Accordingly, $H = H_4$, $H_4'$ or $H_4''$ when it contains zero, one or two $3$-hyperedges, respectively.

For $n \geq 5$, if $H$ is a $3$-graph, then $\delta_3(H) \geq 1$ implies $H = K_n^3$. Hence $H$ contains a Berge-$C_4^3$. Next we assume that $H$ contains a $4$-hyperedge $h = \{v_1, v_2, v_3, v_4\}$. Since $\delta_3(H) \geq 1$, there exist hyperedges $h', h''$ such that $\{v_1, v_2, v_5\} \subseteq h'$ and $\{v_3, v_4, v_5\} \subseteq h''$. We claim that $h'' \cap \{v_1, v_2\} \neq \emptyset$. Suppose to the contrary that $h''$ is either $\{v_3, v_4, v_5\}$ or contains a vertex outside $\{v_i: i \in [5]\}$. Since $\delta_3(H) \geq 1$, there exist hyperedges $h_1, h_1'$ such that $\{v_1, v_3, v_5\} \subseteq h_1$ and $\{v_1, v_4, v_5\} \subseteq h_1'$. We have $h_1 = h_1' = \{v_1, v_3, v_4, v_5\}$; otherwise the vertex sequence $v_1, v_3, v_4, v_5$ and the hyperedge sequence $h, h'', h_1', h_1$ form a Berge-$C_4^3$. Similarly, there exists a hyperedge $h_2 = \{v_2, v_3, v_4, v_5\}$. But now the vertex sequence $v_1, v_2, v_3, v_5$ and the hyperedge sequence $h, h_2, h_1, h'$ form a Berge-$C_4^3$, a contradiction. By symmetry, $h' \cap \{v_3, v_4\} \neq \emptyset$. Without loss of generality, we assume $v_2 \in h''$ and $v_3 \in h'$. Since $\delta_3(H) \geq 1$, there exists a hyperedge $h_3$ such that $\{v_1, v_4, v_5\} \subseteq h_3$. Then the vertex sequence $v_1, v_4, v_2, v_5$ and the hyperedge sequence $h, h'', h', h_3$ form a Berge-$C_4^3$.
\end{proof}

\begin{lemma}\label{L13}
For any $H \in \mathcal{H}_5$, let $H'$ be a hypergraph obtained by adding a new $3$-hyperedge $e$ to $H$. Then $H'$ contains a Berge-$C_5^3$.
\end{lemma}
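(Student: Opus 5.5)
The argument is a finite case analysis on the five hypergraphs in $\mathcal{H}_5$. In each case I will add a $3$-hyperedge $e$ and write down an explicit vertex sequence $v_1,\dots,v_5$ together with five distinct hyperedges $h_1,\dots,h_5$ satisfying $\{v_i,v_{i+1},v_{i+2}\}\subseteq h_i$. The new hyperedge $e$ may duplicate an existing $3$-hyperedge, since $H'$ may be a multi-hypergraph. It may also be a triple of $[5]$ not already present as a hyperedge. In either case $e$ supplies one extra hyperedge covering the single triple $e$. So the task is to use the symmetry of each $H$ to reduce $e$ to a few types of triples, and then build a cycle using $e$ for one of its consecutive triples.

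\textbf{Case $H=H_5$.} The symmetry group acts as $S_4$ on $\{1,2,3,4\}$, so $e$ is either $\{i,j,5\}$ (say $\{1,2,5\}$) or a triple inside $[4]$ (say $\{1,2,3\}$).
\begin{itemize}
\item For $e=\{1,2,3\}$, take the vertex sequence $1,2,3,4,5$ with hyperedges $e$, $\{1,2,3,4\}$, $\{3,4,5\}$, $\{4,5,1\}$, $\{5,1,2\}$.
\item For $e=\{1,2,5\}$ (a second copy), use the order $5,1,2,3,4$. The consecutive triples are $\{5,1,2\}$, $\{1,2,3\}$, $\{2,3,4\}$, $\{3,4,5\}$, $\{4,5,1\}$. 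The two triples $\{1,2,3\}$ and $\{2,3,4\}$ both lie only in the $4$-hyperedge, so this order fails. Instead take $1,5,2,3,4$, whose triples are $\{1,5,2\}$, $\{5,2,3\}$, $\{2,3,4\}$, $\{3,4,1\}$, $\{4,1,5\}$. Here two triples again need $\{1,2,3,4\}$.
\end{itemize}
The general rule is that a $5$-cycle on $[5]$ has exactly two consecutive triples avoiding $5$. These two triples need two different hyperedges covering triples of $[4]$, and $H_5$ has only one. Adding $\{1,2,5\}$ does not help with this, so the real requirement is that $e\subseteq[4]$. I must therefore check whether duplicating $\{i,j,5\}$ can ever yield a cycle. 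If the paper intends $e$ to be a genuinely new triple, that case does not arise for $H_5$, since all $\{i,j,5\}$ are already present. I will interpret ``new'' as a triple not already a hyperedge, which for $H_5$ forces $e\subseteq[4]$, and the cycle above works.

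\textbf{Cases $H_5'$, $H_5''$, $H_5'''$.} These are treated the same way, listing the triples not already present as $3$-hyperedges.
\begin{itemize}
\item For $H_5'$, by the symmetries $S_3$ on $\{1,2,3\}$ and the swap $4\leftrightarrow5$, the new triple is $\{1,2,3\}$, or $\{1,2,4\}$ (equivalently $\{1,2,5\}$), or $\{1,4,5\}$-type, which is already present.
\item For $H_5''$ and $H_5'''$, $e$ is any triple of $[5]$ other than $\{3,4,5\}$ in $H_5''$, up to their respective symmetries.
\end{itemize}
For each type I pick a cyclic order in which $e$ covers one consecutive triple. The remaining four triples must then be matched injectively to the other hyperedges containing them. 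This is a small Hall-type check on five triples versus at most five hyperedges, and I would exhibit the matching explicitly. For example, in $H_5'''$ with $e=\{1,2,3\}$, use the order $1,2,3,4,5$ and assign $\{2,3,4\}\to\{1,2,3,4\}$, $\{3,4,5\}\to\{1,3,4,5\}$, $\{4,5,1\}\to\{1,2,4,5\}$, $\{5,1,2\}\to\{1,2,3,5\}$.

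\textbf{Main obstacle.} The hardest part is pinning down the meaning of ``new'' and choosing correct cyclic orders. Orders are constrained because, in $H_5''$ for instance, the $4$-hyperedges pairwise share three vertices. I would resolve these by systematically trying orders in which $e$'s triple sits between triples assigned to distinct $4$-hyperedges.
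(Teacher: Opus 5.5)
Your plan is the paper's: reduce $e$ by the automorphisms of each member of $\mathcal{H}_5$, then exhibit a Berge-$C_5^3$ that uses $e$ for one consecutive triple. You read ``new'' as a triple not already in $E(H)$, and that is the right reading. Your argument shows it is forced: with a duplicated $\{i,j,5\}$ in $H_5$, the two consecutive triples avoiding $5$ both need $\{1,2,3,4\}$, so no Berge-$C_5^3$ exists. The paper uses the same reading when it says $e\subseteq\{1,2,3,4\}$ for $H_5$. Your two explicit cycles, for $H_5$ and for $H_5'''$ with $e=\{1,2,3\}$, agree with the paper's.

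The gap is that the proof stops there. Up to symmetry there are seven cases: $\{1,2,3\}$ for $H_5$, and two orbits each for $H_5'$, $H_5''$ and $H_5'''$. You verify two of them.
\begin{itemize}
\item For $H_5'$ you find the orbits but give no cycle.
\item For $H_5''$ and $H_5'''$ you neither determine the orbits nor give cycles.
\item Everything else is deferred to a ``Hall-type check'' that you assert rather than carry out.
\end{itemize}
The whole content of this lemma is the finite verification, so this is not a formality. Your own $H_5$ analysis shows the required injective assignment can fail, so each case must actually be exhibited. Two small slips: $\mathcal{H}_5$ has four members, not five, and $H_5$ has seven hyperedges, so ``at most five'' is inaccurate.

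The missing cases are quick. The automorphism groups are:
\begin{itemize}
\item $S_3$ on $\{1,2,3\}$ together with $4\leftrightarrow 5$ for $H_5'$;
\item $S_3$ on $\{3,4,5\}$ together with $1\leftrightarrow 2$ for $H_5''$;
\item $S_4$ on $\{2,3,4,5\}$ for $H_5'''$.
\end{itemize}
In each case the admissible triples form two orbits, represented by $\{1,2,3\}$ and $\{2,3,4\}$.

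The single order $1,2,3,4,5$ then works every time. Its consecutive triples are $\{1,2,3\},\{2,3,4\},\{3,4,5\},\{1,4,5\},\{1,2,5\}$. Assign hyperedges as follows:
\begin{itemize}
\item give $e$ its own triple, and give $\{1,2,3,4\}$ the other of the first two triples;
\item give $\{1,2,3,5\}$ the triple $\{1,2,5\}$;
\item cover $\{3,4,5\}$ and $\{1,4,5\}$ by $\{3,4,5\},\{1,4,5\}$ in $H_5'$, by $\{3,4,5\},\{1,2,4,5\}$ in $H_5''$, and by $\{1,3,4,5\},\{1,2,4,5\}$ in $H_5'''$.
\end{itemize}
This is precisely the paper's argument. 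The difficulty you anticipate in choosing orders does not arise.
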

\begin{proof}
For convenience, let $e_{ijk}$ denote the $3$-hyperedge $\{i,j,k\}$ and $e_{ijkl}$ denote the $4$-hyperedge $\{i,j,k,l\}$. We first assume that $H = H_5$. Since all triples of $[5]$ that are not contained in $\{1,2,3,4\}$ already belong to $E(H_5)$, the newly added hyperedge $e$ must be a subset of $e_{1234}$. Without loss of generality, we assume $e = e_{123}$. Then the vertex sequence $1, 2, 3, 4, 5$ and the hyperedge sequence $e, e_{1234}, e_{345}, e_{145}, e_{125}$ form a Berge-$C_5^3$.

Next, we assume that $H = H_5'$. Since all triples of $[5]$ that are not contained in $\{1,2,3,4\}$ and $\{1,2,3,5\}$ already belong to $E(H_5)$, the newly added hyperedge $e$ must be a subset of $e_{1234}$ or $e_{1235}$. This implies that either $e = e_{123}$ or $e$ consists of two vertices from $\{1,2,3\}$ and one vertex from $\{4,5\}$. It suffices to consider the cases $e = e_{123}$ and $e = e_{234}$. For $e = e_{123}$, the vertex sequence $1, 2, 3, 4, 5$ and the hyperedge sequence $e, e_{1234}, e_{345}, e_{145}, e_{1235}$ form a Berge-$C_5^3$; for $e = e_{234}$, the same vertex sequence and the hyperedge sequence $e_{1234}, e, e_{345}, e_{145}, e_{1235}$ form a Berge-$C_5^3$.

Finally, we assume that $H = H_5''$ or $H = H_5'''$. For $H = H_5''$, $e$ contains at least one vertex from $\{1,2\}$; for $H = H_5'''$, $e$ contains at least two vertices from $\{2,3,4,5\}$. In both cases, it suffices to consider the cases $e = e_{123}$ and $e = e_{234}$. Thus the vertex sequence $1, 2, 3, 4, 5$ and the hyperedge sequence $e_{123}, e_{1234}, e', e_{1245}, e_{1235}$ or $e_{1234}, e_{234}, e', e_{1245}, e_{1235}$ form a Berge-$C_5^3$, where $e' = e_{345}$ for $H = H_5''$ and $e' = e_{1345}$ for $H = H_5'''$.
\end{proof}

\begin{lemma}\label{L12}
Let $H$ be a $\{3, 4\}$-graph on $n \geq 5$ vertices. If $\delta_3(H) \geq 1$, then $H$ contains a Berge-$C_5^3$ unless $H \in \mathcal{H}_5 \cup \{H_6, H_7, H_8\}$.
\end{lemma}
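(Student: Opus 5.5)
We would argue by induction on $n$, treating $n=5$ directly and then reducing larger $n$ to $5$-vertex traces via Proposition \ref{P}.

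\textbf{Base case $n=5$.} Since $\delta_3(H)\ge 1$ and there are ten triples on $[5]$, each covered by a $3$-hyperedge or by one of the at most five $4$-hyperedges, we split according to the number $m$ of $4$-hyperedges.
\begin{itemize}
\item If $m=0$, then $H=K_5^3$ and the consecutive triples of $1,2,3,4,5$ give a Berge-$C_5^3$.
\item If $m\ge 1$, we may take $H$ minimal with respect to $\delta_3\ge 1$. This is harmless: deleting a superfluous $3$-hyperedge keeps the condition, and any Berge-$C_5^3$ in the smaller hypergraph survives.
\item Conversely, Lemma \ref{L13} shows that adding any $3$-hyperedge to a member of $\mathcal{H}_5$ creates a Berge-$C_5^3$. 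So it suffices to show that a minimal $H$ is either in $\mathcal{H}_5$ or contains a Berge-$C_5^3$.
\end{itemize}

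For minimal $H$ we run through $m=1,\dots,5$.
\begin{itemize}
\item $m=1$: all six triples outside the single $4$-set are present as $3$-hyperedges, which gives exactly $H_5$.
\item $m=2$: two $4$-sets on five vertices share three vertices, and covering the remaining triples forces $H_5'$.
\item $m=3$: either the three $4$-sets share a common triple, or they do not. In the first case a direct choice of hyperedges gives a Berge-$C_5^3$. In the second case covering the remaining triples forces $H_5''$, up to isomorphism.
\item $m=4$: this gives $H_5'''$ (possibly with $3$-hyperedges, which are handled by Lemma \ref{L13}).
\item $m=5$: the vertex sequence $1,\dots,5$ together with the $4$-sets $[5]\setminus\{i+3\}$ gives a Berge-$C_5^3$ directly.
\end{itemize}
Each of these is a finite check by symmetry.

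\textbf{Induction step $n\ge 6$.} Suppose $H$ is not in the exceptional list and has no Berge-$C_5^3$. For every $5$-set $S$, the trace $H_S^3$ satisfies $\delta_3\ge 1$, so by Proposition \ref{P} and the base case, $H_S^3\in\mathcal{H}_5$ for all $S$. The goal is to exploit this strong local constraint to pin down $H$.

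First, since $H_S^3\cong H_5$ also arises whenever a hyperedge of size $4$ meets $S$ in $4$ vertices, we analyse which traces can occur. A $5$-set contained in no $4$-hyperedge-plus-one pattern gives $K_5^3$ as its trace, which contains a $C_5^3$. Hence every $5$-set must contain at least one $4$-hyperedge.

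We then show that $H$ is \emph{linear on triples}: every triple lies in exactly one hyperedge. Otherwise, some $5$-set $S$ whose trace has a triple covered twice would give a hypergraph from $\mathcal{H}_5$ with an extra edge. Applying Lemma \ref{L13} (after trimming) to the trace then yields a Berge-$C_5^3$.

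So $H$ is a $\{3,4\}$-design-like system in which the triples are partitioned by hyperedges and every $5$-set spans a $4$-hyperedge. In particular, every $5$-set meets the $4$-hyperedges in such a way that its trace is one of $H_5,H_5',H_5'',H_5'''$ with triples covered exactly once.

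\textbf{Classification of the global structure.} Counting then bounds $n$. Every pair of disjoint-ish configurations must be covered, and counting $4$-hyperedges through a pair or through a vertex within every $5$-set should force $n\le 8$. The natural tool is that the complements or links form a Fano-like structure: the link of a vertex under the $4$-hyperedges is a partial Steiner triple system that meets every $4$-set of the remaining vertices.

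From this we would identify $H_6$ for $n=6$, $H_7$ for $n=7$ via the Fano plane, and $H_8$ for $n=8$. For $n\ge 9$ we would derive a contradiction, for instance by finding a $5$-set whose trace is $K_5^3$-like or has two disjoint-triple $4$-edges not fitting $\mathcal{H}_5$.

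This global classification step, especially proving that $n\le 8$ and that the structures are unique, is the main obstacle. First I would try fixing a vertex $v$, studying the link $3$-graph of $v$ restricted to $4$-hyperedges, showing it must be a Steiner triple system on $n-1$ vertices meeting every $4$-set, and then invoking that the only such systems are small, namely the Fano plane and its truncations.
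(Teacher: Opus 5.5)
Your base case $n=5$ is fine. (The first branch of your $m=3$ split is vacuous: three distinct $4$-subsets of a $5$-set always meet in exactly two vertices, so minimality gives $H_5''$ at once.)

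The first genuine gap is the step ``$H$ is linear on triples''. Lemma~\ref{L13} only disposes of a triple lying in both a $3$-edge and a $4$-edge. If a triple $T$ lies in two $4$-edges $T\cup\{x\}$ and $T\cup\{y\}$, the trace on $T\cup\{x,y\}$ just has two $4$-edges sharing a triple. That is exactly the structure of $H_5'$, $H_5''$ and $H_5'''$, so there is no extra edge to trim and Lemma~\ref{L13} says nothing. Excluding these three traces is precisely where $n\ge 6$ must be used. Suppose the trace on $S=\{1,\dots,5\}$ is one of them, labelled as in Construction~\ref{Construction2}. Then $\{1,2,3,4\}$ and $\{1,2,3,5\}$ are edges and every $3$-edge of the trace contains $5$. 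For a sixth vertex $w$, the edges covering $\{1,2,w\}$, $\{2,4,w\}$ and $\{3,4,w\}$ therefore avoid $\{3,4\}$, $\{1,3\}$ and $\{1,2\}$ respectively. Together with $\{1,2,3,4\}$ and $\{1,2,3,5\}$ they form a Berge-$C_5^3$ on $1,2,w,4,3$. Only after this do you know that every $5$-trace is $\cong H_5$, and hence that every triple lies in exactly one edge.

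The second, larger gap is the global classification, which you leave open. The route you suggest also cannot work as stated. The $4$-edge link of a vertex is \emph{not} a Steiner triple system on $n-1$ points for $H_6$ or $H_7$: a vertex of $H_6$ lies in only two $4$-edges, and a vertex of $H_7$ in four, giving four triples on six points. Only in $H_8$, the Steiner quadruple system on $8$ points, are the links Fano planes.

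You also announce an induction on $n$ but never use the case $n-1$, and that is the missing idea. Pass to $(n-1)$-vertex traces, which by induction are the previous extremal hypergraph.
\begin{itemize}
\item For $n=6$, linearity plus double counting $3$- and $4$-edges over the six $5$-traces $\cong H_5$ gives $3x+4y=36$ and $2y=6$. So $H$ has $8$ triples and $3$ quadruples, and the edges through $v_6$ are forced, giving $H_6$.
\item For $n=7$ and $n=8$, the same count over $6$-traces $\cong H_6$, respectively $7$-traces $\cong H_7$, gives $(x,y)=(7,7)$ and $(0,14)$, which pins down $H_7$ and $H_8$.
\item For $n\ge 9$, the traces on $\{v_1,\dots,v_8\}$ and $\{v_2,\dots,v_9\}$ would both be $\cong H_8$. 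But a $4$-edge through $v_1$ inside the first set becomes a $3$-edge of the second trace, while $H_8$ has no $3$-edges.
\end{itemize}
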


\begin{proof}
Let $V(H) = \{v_i: i \in [n]\}$. We assume that $H$ is Berge-$C_5^3$-free. We will show that $H \in \mathcal{H}_5 \cup \{H_6, H_7, H_8\}$.

\begin{case}\label{L12case1}
$n = 5$.
\end{case}
If every hyperedge of $H$ is of cardinality $3$ or $H$ has five $4$-hyperedges, then $H$ contains a Berge-$C_5^3$. Therefore the number of $4$-hyperedges in $H$ is between $1$ and $4$. Since every triple of $V(H)$ is contained in at least one hyperedge, we can see that $H$ must contain some member of $\mathcal{H}_5$ as a subhypergraph. From Lemma \ref{L13} and the fact that $H$ is Berge-$C_5^3$-free, we derive $H \in \mathcal{H}_5$.

\begin{case}\label{L12case2}
$n = 6$.
\end{case}
We define $\mathcal{H} = \{H_{V(H) \setminus \{v_i\}}^3: i \in [6]\}$. Since $H$ is Berge-$C_5^3$-free, Proposition \ref{P} implies that each hypergraph in $\mathcal{H}$ is Berge-$C_5^3$-free. By Case \ref{L12case1}, each hypergraph $H' \in \mathcal{H}$ is isomorphic to $H_5$, $H_5'$, $H_5''$ or $H_5'''$.

Suppose that there exists a hypergraph $H' \in \mathcal{H}$ isomorphic to $H_5'$, $H_5''$ or $H_5'''$. We assume $H' = H_{V(H) \setminus \{v_6\}}^3$ with the isomorphism mapping $v_i \mapsto i$ for all $i \in [5]$. Since $\delta_3(H) \geq 1$, $H$ contains hyperedges $h, h', h''$ such that $\{v_1, v_2, v_6\} \subseteq h$, $\{v_2, v_4, v_6\} \subseteq h'$ and $\{v_3, v_4, v_6\} \subseteq h''$. Since each $3$-hyperedge of $H'$ contains $v_5$, every $4$-hyperedge of $H$ containing $v_6$ must also contain $v_5$. We further have $\{v_3, v_4\} \cap h = \emptyset$, $\{v_1, v_3\} \cap h' = \emptyset$ and $\{v_1, v_2\} \cap h'' = \emptyset$. Consequently, the vertex sequence $v_1, v_2, v_6, v_4, v_3$ and the hyperedge sequence $h, h', h'', \{v_1, v_2, v_3, v_4\}, \{v_1, v_2, v_3, v_5\}$ form a Berge-$C_5^3$, a contradiction. We conclude that $H_{V(H) \setminus \{v_i\}}^3 \cong H_5$ for every $i \in [6]$. Since $\delta_3(H) \geq 1$ and the intersection of any two hyperedges in $H_5$ is of size at most two, every triple of $V(H)$ is contained in exactly one hyperedge of $H$.

Since every $3$-trace with five vertices of $H$ is isomorphic to $H_5$, we assume $H_0 = H^3_{V(H)\setminus\{v_6\}}$ with the isomorphism mapping $v_i\mapsto i$ for all $i\in[5]$. We can see that each hypergraph in $\mathcal{H}$ consists of six $3$-hyperedges and one $4$-hyperedge. The six hypergraphs in $\mathcal{H}$ totally yield thirty-six $3$-hyperedges and six $4$-hyperedges. Note that each $3$-hyperedge of $H$ contributes a $3$-hyperedge to three hypergraphs in $\mathcal{H}$, while each $4$-hyperedge in $H$ contributes a $3$-hyperedge to four hypergraphs and a $4$-hyperedge to two hypergraphs in $\mathcal{H}$. Denote by $x$ the number of $3$-hyperedges and $y$ the number of $4$-hyperedges in $H$. We obtain $3x + 4y = 36$ and $2y = 6$, which solves to $x = 8$ and $y = 3$. Hence, $H$ has eight $3$-hyperedges and three $4$-hyperedges.

Note that each $4$-hyperedge of $H$ is reduced to a $3$-hyperedge of $H_0$ when it contains $v_6$, and is still a $4$-hyperedge of $H_0$ otherwise. We conclude that $v_6$ is contained in two $4$-hyperedges and four $3$-hyperedges of $H$. Recall that $H$ has three $4$-hyperedges and one of them is $\{v_1,v_2,v_3,v_4\}$. The other two $4$-hyperedges of $H$, say $e$ and $e'$, must be extensions of two $3$-hyperedges of $H_0$ by adding the vertex $v_6$. This implies that $v_5$ is also contained in both $e$ and $e'$. Since every triple of $V(H)$ is contained in exactly one hyperedge of $H$, we have $e\cap e'=\{v_5,v_6\}$. Without loss of generality, assume that $e=\{v_1,v_2,v_5,v_6\}$ and $e'=\{v_3,v_4,v_5,v_6\}$. Then the 3-hyperedges of $H$ must be $\{v_1, v_3, v_5\}$, $\{v_1, v_4, v_5\}$, $\{v_2, v_3, v_5\}$, $\{v_2, v_4, v_5\}$, $\{v_1, v_3, v_6\}$, $\{v_1, v_4, v_6\}$, $\{v_2, v_3, v_6\}$ and $\{v_2, v_4, v_6\}$. This verifies $H \cong H_6$.

\begin{case}\label{L12case3}
$n = 7$.
\end{case}
We define $\mathcal{H} = \{H_{V(H) \setminus \{v_i\}}^3: i \in [7]\}$. Since $H$ is Berge-$C_5^3$-free, Proposition \ref{P} implies that each hypergraph in $\mathcal{H}$ is Berge-$C_5^3$-free. By Case \ref{L12case2}, each hypergraph in $\mathcal{H}$ is isomorphic to $H_6$. We assume $H_0 = H^3_{V(H)\setminus\{v_7\}}$ with the isomorphism mapping $v_i\mapsto i$ for all $i \in [6]$. By a similar analysis to Case \ref{L12case2}, the $3$-trace of any five vertices in $H$ can only be $H_5$. Therefore, every triple in $V(H)$ is contained in exactly one hyperedge of $H$. We can see that each hypergraph in $\mathcal{H}$ consists of eight $3$-hyperedges and three $4$-hyperedges. The seven hypergraphs in $\mathcal{H}$ totally yield fifty-six $3$-hyperedges and twenty-one $4$-hyperedges. Note that each $3$-hyperedge in $H$ contributes a $3$-hyperedge to four hypergraphs in $\mathcal{H}$, while each $4$-hyperedge in $H$ contributes a $3$-hyperedge to four hypergraphs and a $4$-hyperedge to three hypergraphs in $\mathcal{H}$. Denote by $x$ the number of $3$-hyperedges and $y$ the number of $4$-hyperedges in $H$. We obtain $4x + 4y = 56$ and $3y = 21$, which solves to $x = y = 7$. Hence, $H$ has seven $3$-hyperedges and seven $4$-hyperedges.

Note that each $4$-hyperedge of $H$ is reduced to a $3$-hyperedge of $H_0$ when it contains $v_7$, and is still a $4$-hyperedge of $H_0$ otherwise. We conclude that $v_7$ is contained in four $4$-hyperedges and three $3$-hyperedges of $H$. Without loss of generality, we assume that $\{v_1, v_3, v_5, v_7\} \in E(H)$. Since every triple of $V(H)$ is contained in exactly one hyperedge of $H$, we deduce that $\{v_1, v_3, v_6\}$, $\{v_1, v_4, v_5\}$ and $\{v_2, v_3, v_5\}$ are $3$-hyperedges in $H$. If $\{v_2, v_4, v_6, v_7\} \in E(H)$, then $\{v_2, v_4, v_5\}$, $\{v_2, v_3, v_6\}$ and $\{v_1, v_4, v_6\}$ are $3$-hyperedges in $H$, which implies at most two $4$-hyperedge contains $v_7$. It follows that $\{v_2, v_4, v_6\}$ is a $3$-hyperedge in $H$, and $\{v_2, v_4, v_5, v_7\}$, $\{v_2, v_3, v_6, v_7\}$, and $\{v_1, v_4, v_6, v_7\}$ are $4$-hyperedges in $H$. Since every triple of $V(H)$ is contained in exactly one hyperedge of $H$, $\{v_1, v_2, v_7\}$, $\{v_3, v_4, v_7\}$ and $\{v_5, v_6, v_7\}$ are $3$-hyperedges in $H$. This completes the proof that $H \cong H_7$.

\begin{case}\label{L12case4}
$n = 8$.
\end{case}
We define $\mathcal{H} = \{H_{V(H) \setminus \{v_i\}}^3: i \in [8]\}$. Since $H$ is Berge-$C_5^3$-free, Proposition \ref{P} implies that each hypergraph in $\mathcal{H}$ is Berge-$C_5^3$-free. By Case \ref{L12case3}, each hypergraph in $\mathcal{H}$ is isomorphic to $H_7$. We assume $H_0 = H^3_{V(H)\setminus\{v_8\}}$ with the isomorphism mapping $v_i\mapsto i$ for all $i \in [7]$. By a similar analysis to Case \ref{L12case2}, the $3$-trace of any five vertices in $H$ can only be $H_5$. Therefore, every triple in $V(H)$ is contained in exactly one hyperedge of $H$. We can see that each hypergraph in $\mathcal{H}$ consists of seven $3$-hyperedges and seven $4$-hyperedges. The eight hypergraphs in $\mathcal{H}$ totally yield fifty-six $3$-hyperedges and fifty-six $4$-hyperedges. Note that each $3$-hyperedge in $H$ contributes a $3$-hyperedge to five hypergraphs in $\mathcal{H}$, while each $4$-hyperedge in $H$ contributes a $3$-hyperedge to four hypergraphs and a $4$-hyperedge to four hypergraphs in $\mathcal{H}$. Denote by $x$ the number of $3$-hyperedges and $y$ the number of $4$-hyperedges in $H$. We obtain $5x + 4y = 56$ and $4y = 56$, which solves to $x = 0$ and $y = 14$. Hence, $H$ has fourteen $4$-hyperedges.

Note that each $4$-hyperedge of $H$ is reduced to a $3$-hyperedge of $H_0$ when it contains $v_8$, and is still a $4$-hyperedge of $H_0$ otherwise. We conclude that $v_8$ is contained in four $4$-hyperedges and three $3$-hyperedges of $H$. Thus, all hyperedges containing $v_8$ are constructed by adding $v_8$ to some $3$-hyperedge of $H_7$, which completes the proof that $H \cong H_8$.

\begin{case}\label{L12case5}
$n \geq 9$.
\end{case}
Since $H$ is Berge-$C_5^3$-free, Proposition \ref{P} implies that $H_{\{v_i: i \in [8]\}}^3$ and $H_{\{v_i: i \in [2,9]\}}^3$ is Berge-$C_5^3$-free. By Case \ref{L12case4}, both of these two hypergraphs are isomorphic to $H_8$. However, every $4$-hyperedge containing $v_1$ in $H_{\{v_i: i \in [8]\}}^3$ serves as a $3$-hyperedge in $H_{\{v_i: i \in [2,9]\}}^3$. This contradicts that $H_8$ has no $3$-hyperedges.
\end{proof}

\begin{lemma}\label{L}
Let $H$ be a $\{3, 4\}$-graph on $n \geq 6$ vertices with $\delta_3(H) \geq 1$. If $H$ contains a Berge-$C_t^3$ for some $5 \leq t \leq n-1$, then $H$ contains a Berge-$C_{t+1}^3$.
\end{lemma}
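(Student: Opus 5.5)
We argue by contradiction: take a Berge-$C_t^3$ $C$ with vertex sequence $v_1,\dots,v_t$ and hyperedge sequence $h_1,\dots,h_t$, $5\le t\le n-1$, and assume $H$ has no Berge-$C_{t+1}^3$. Since $t\le n-1$, fix a vertex $u\notin\{v_1,\dots,v_t\}$. We try to insert $u$ into $C$. Inserting $u$ between $v_i$ and $v_{i+1}$ gives the vertex sequence $\dots,v_{i-1},v_i,u,v_{i+1},v_{i+2},\dots$. The three consecutive triples through the insertion point are $\{v_{i-1},v_i,u\}$, $\{v_i,u,v_{i+1}\}$ and $\{u,v_{i+1},v_{i+2}\}$; these replace the two triples $\{v_{i-1},v_i,v_{i+1}\}$ and $\{v_i,v_{i+1},v_{i+2}\}$, which were embedded in $h_{i-1}$ and $h_i$. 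So we need three distinct hyperedges for these three new triples, all different from the other $t-2$ cycle hyperedges $h_j$ ($j\ne i-1,i$). Each new triple lies in some hyperedge, because $\delta_3(H)\ge1$.

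Step 1 records when $h_{i-1}$ or $h_i$ can be reused. This happens only when it is a $4$-hyperedge containing $u$. For example, if $h_i=\{v_i,v_{i+1},v_{i+2},u\}$, then $h_i$ can host either $\{v_i,u,v_{i+1}\}$ or $\{u,v_{i+1},v_{i+2}\}$.

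Step 2 is the key observation: a hyperedge containing $u$ and two cycle vertices is \emph{new}, i.e.\ not among the $h_j$, unless it is a $4$-hyperedge $h_j\supseteq\{v_j,v_{j+1},v_{j+2},u\}$. In that case it contains exactly the three cycle vertices $v_j,v_{j+1},v_{j+2}$. Call such $h_j$ \emph{$u$-absorbing}. For each $i$, let $f_i$ be a hyperedge containing $\{v_i,u,v_{i+1}\}$. Define $g_i\supseteq\{v_{i-1},v_i,u\}$ and $g'_i\supseteq\{u,v_{i+1},v_{i+2}\}$ similarly.

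The case analysis then runs as follows.

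(a) If no $h_j$ contains $u$, all hyperedges through $u$ are new. The only conflict is that the new triples may share a hyperedge. A $4$-hyperedge containing $u$ meets at most three cycle vertices, so it can cover at most two of the three consecutive new triples. We use the free choice of $i$ among $t\ge5$ positions, together with the extra triples $\{v_{i-1},u,v_{i+1}\}$ obtained by rerouting, to find an $i$ where the three triples lie in distinct hyperedges. If $g_i=f_i=g'_i$ were forced, that hyperedge would contain $\{v_{i-1},v_i,v_{i+1},v_{i+2},u\}$, which is impossible. So the worst case is two of them coinciding. Then a $4$-hyperedge $\{v_{i-1},v_i,v_{i+1},u\}$ or $\{v_i,v_{i+1},v_{i+2},u\}$ exists, and we shift the insertion position by one and recheck.

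(b) If some $h_j$ is $u$-absorbing, insert $u$ so that $h_j$ itself carries one of the new triples. For instance, insert between $v_{j+1}$ and $v_{j+2}$, letting $h_j$ carry $\{v_{j+1},u,v_{j+2}\}$. The freed hyperedge $h_{j+1}$ is then available, and $h_j$'s old triple is rehosted by $h_j$ only if compatible. Otherwise, use the new hyperedges $g,g'$ from Step 2.

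The main obstacle is case (b) when many consecutive $h_j$ are $u$-absorbing 4-hyperedges. There the candidate new hyperedges may coincide with cycle hyperedges, and the counting becomes tight. Here I would first try to exploit the fact that two absorbing hyperedges $h_j,h_{j+1}$ both contain $u$, $v_{j+1}$ and $v_{j+2}$, and then choose the insertion between $v_{j+1}$ and $v_{j+2}$ with the hosting $h_j\to\{v_j,v_{j+1},u\}$, $h_{j+1}\to\{u,v_{j+2},v_{j+3}\}$, and a third hyperedge for $\{v_{j+1},u,v_{j+2}\}$. That third hyperedge is either new or is $h_{j-1}$ or $h_{j+2}$ if those are absorbing; in the latter case, we cascade the rerouting around the cycle. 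If every $h_j$ were absorbing, each would be $\{v_j,v_{j+1},v_{j+2},u\}$, and a direct Berge-$C_{t+1}^3$ exists by a cyclic shift of hyperedges. If that fails, a last resort is to replace a cycle vertex by $u$ and use $n\ge6$ to find another outside vertex.
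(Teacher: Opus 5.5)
There is a genuine gap. Apart from an unspecified local ``rerouting'', every move in your plan inserts $u$ into the old cyclic order and only brings in new hyperedges through $u$. The case you defer is exactly where this cannot work. Your one concrete claim there is also false: if every $h_j$ equals $A_j=\{v_j,v_{j+1},v_{j+2},u\}$, no cyclic shift of these $t$ hyperedges gives a Berge-$C_{t+1}^3$, which needs $t+1$ distinct hyperedges.

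Here is a concrete instance. Let $t=5$, $n=6$, and let $E(H)$ consist of $A_1,\dots,A_5$ and the five $3$-hyperedges $\{v_j,v_{j+1},v_{j+3}\}$ (indices mod $5$). Then $\delta_3(H)\ge 1$, and the only hyperedges containing $u$ or an old consecutive triple are the $A_j$. Hence any Berge-$C_6^3$ must use one of the $3$-hyperedges, i.e.\ it must make a non-consecutive triple of old cycle vertices consecutive. No insertion, shift or cascade does this. Since $n=t+1$, there is also no second outside vertex for your last resort. A Berge-$C_6^3$ does exist: $v_1,v_4,v_2,u,v_5,v_3$ with hyperedges $\{v_1,v_2,v_4\},A_2,A_5,A_3,\{v_1,v_3,v_5\},\{v_1,v_3,v_4\}$. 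But it scrambles the old order.

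Your case (a) has the same defect: ``shift by one and recheck'' is not an argument. Pure insertion can be blocked at every position, e.g.\ for $t=6$ with $\{v_{i-1},v_i,v_{i+1},u\}\in E(H)$ exactly for odd $i$ and each triple $\{v_k,v_{k+1},u\}$ in only one hyperedge.

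The missing ingredient is a systematic supply of non-local reorderings, together with a normalization that makes them tractable. The paper proceeds as follows.
\begin{itemize}
\item It first passes to the trace $H^3_{T\cup\{u\}}$, checking that the cycle survives there, so all hyperedges live on $t+1$ vertices.
\item It assumes $h_{i-1}=\{v_{i-1},v_i,v_{i+1},u\}$ whenever this set is a hyperedge, colouring $v_i$ red in that case and blue otherwise.
\item Two consecutive blue vertices already give a Berge-$C_{t+1}^3$. Even this needs reorderings such as $v_{i-2},v_{i-1},u,v_{i+1},v_i,v_{i+2},\dots$.
\item Four red vertices $v_i,v_{i+2},v_j,v_{j+2}$ that are far enough apart give one by reversing a whole segment, $v_{i-1},v_i,u,v_j,v_{j-1},\dots,v_{i+1},v_{j+1},v_{j+2},\dots$. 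This uses hyperedges forced by $\delta_3\ge 1$ on triples such as $\{v_i,v_j,u\}$ and $\{v_{i+1},v_{i+2},v_{j+1}\}$.
\item Together these settle $t\ge 7$. The eight remaining colour patterns for $t\in\{5,6\}$ are checked by hand with ad hoc orderings like the one above.
\end{itemize}
Without some such mechanism for exploiting hyperedges on non-consecutive triples, your outline cannot be completed.
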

\begin{proof}
Let $V(H) = \{v_i: i \in [n]\}$, and let $C_0$ be a Berge-$C_t^3$ in $H$ with vertex sequence $v_1, v_2, \dots, v_t$ and hyperedge sequence $h_1, h_2, \dots, h_t$. We write $T = \{v_i: i \in [t]\}$. We claim that $H_T^3$ also contains a Berge-$C_t^3$ with the identical vertex sequence $v_1, v_2, \dots, v_t$ and the hyperedge sequence $h_1', h_2', \dots, h_t'$, where $h_i' = h_i \cap T$ for each $i \in [t]$. Suppose to the contrary that $h_i' = h_j'$ for some distinct $i, j \in [t]$. Since $\{v_i,v_{i+1},v_{i+2}\}\subseteq h_i'$, $\{v_j,v_{j+1},v_{j+2}\}\subseteq h_j'$ and $\{v_i,v_{i+1},v_{i+2}\}\neq \{v_j,v_{j+1},v_{j+2}\}$, we have $|h_i'| = |h_j'| \geq 4$. Note that $h_i' \subseteq h_i$ and $h_j' \subseteq h_j$. We get $|h_i| = |h_j| = 4$ and consequently $h_i = h_j$, a contradiction. By Proposition \ref{P}, for every $v \in V(H) \setminus T$, $H_{T \cup \{v\}}^3$ contains a Berge-$C_t^3$ with the same vertex sequence $v_1, v_2, \dots, v_t$.

Given a vertex cyclic ordering $S$, we use $V(S)$ to denote the set of vertices in $S$. For a cyclic ordering $S$ of $t$ vertices in $H$ and a vertex $u \in V(H) \setminus V(S)$, we call the pair $(S, u)$ \emph{admissible} with respect to $H$ if $H_{V(S) \cup \{u\}}^3$ contains a Berge-$C_t^3$ with the vertex sequence $S$. For each admissible pair $(S, u)$, say $S = (v_1, v_2, \dots, v_t)$, define
$$\sigma(S, u) = |\{i: \{v_{i-1}, v_i, v_{i+1}, u\} \in E(H)\}|,$$
with indices modulo $t$. The analysis in the previous paragraph implies the existence of an admissible pair in $H$.

Let $(S_0, v)$ be an admissible pair with $\sigma(S_0, v)$ as large as possible. Without loss of generality, we take $S_0 = (v_1, v_2, \dots, v_t)$. Let $D = v_1v_2 \dots v_tv_1$ denote a cycle (which is a $2$-graph). We define a $2$-coloring on $V(D)$ by coloring $v_i$ red if $\{v_{i-1}, v_i, v_{i+1}, v\} \in E(H)$, and blue otherwise. Let $H_0 = H_{V(S_0) \cup \{v\}}^3$. Since $(S_0, v)$ is admissible, $H_0$ contains a Berge-$C_t^3$ with the vertex sequence $v_1, v_2, \dots, v_t$ and the hyperedge sequence $e_1, e_2, \dots, e_t$. Note that the hyperedge $\{v_{i-1}, v_i, v_{i+1}, v\}$ can only serve as $e_{i-1}$. Accordingly, we assume $e_{i-1} = \{v_{i-1}, v_i, v_{i+1}, v\}$ whenever $v_i$ is red in $D$. From now on, we denote this Berge-$C_t^3$ by $C$. A hyperedge $h \in E(H_0)$ is called \emph{unpicked} if $h$ is not a hyperedge of $C$.

By Proposition \ref{P}, we have $\delta_3(H_0) \geq 1$. It suffices to find a Berge-$C_{t+1}^3$ in $H_0$ to guarantee the existence of a Berge-$C_{t+1}^3$ in $H$. For any vertex sequence $v_1', v_2', \dots, v_{t+1}'$ of $V(H_0)$, there exists a hyperedge sequence $e_1', e_2', \dots, e_{t+1}'$ such that $\{v_i', v_{i+1}', v_{i+2}'\} \subseteq e_i'$ for each $i \in [t+1]$ (the indices are taken modulo $t + 1$). This sequence may contain repeated hyperedges, which does not form a Berge-$C_{t+1}^3$. Since $H_0$ is a $\{3, 4\}$-graph, $e_i' = e_j'$ ($i \neq j$) implies $j = i \pm 1$. In the following, we will find a Berge-$C_{t+1}^3$ in $H_0$ based on $C$.

The proof of Lemma \ref{L} is given by distinguishing the numbers of red and blue vertices in $D$. We first deal with the case where one of the two numbers is large. After this process, only a few cases are left, and the structures are extremely clear. Hence, we analyse all possible configurations to complete the proof of Lemma \ref{L}.

\setcounter{claim}{0}
\begin{claim}\label{LC3}
If $D$ has two consecutive blue vertices $v_i$ and $v_{i+1}$ for some $i \in [t]$, then $H_0$ contains a Berge-$C_{t+1}^3$.
\end{claim}
\begin{proof}
We first prove for the case that $D$ has four consecutive blue vertices $v_i$, $v_{i+1}$, $v_{i+2}$, $v_{i+3}$. Since $\delta_3(H_0) \geq 1$, there exist unpicked hyperedges $h_1$, $h_2$, $h_3$ such that $\{v_i, v_{i+1}, v\} \subseteq h_1$, $\{v_{i+1}, v_{i+2}, v\} \subseteq h_2$ and $\{v_{i+2}, v_{i+3}, v\} \subseteq h_3$. We have $h_1 \neq h_2$. Otherwise there exists a hyperedge $h = \{v_i, v_{i+1}, v_{i+2}, v\}$, contradicting the assumption that $v_{i+1}$ is a blue vertex in $D$. Similarly, we have $h_2 \neq h_3$. Now the vertex sequence $v_{i-1}, v_i, v_{i+1}, v, v_{i+2}, v_{i+3}, \dots$ and the hyperedge sequence $e_{i-1}, h_1, h_2, h_3, e_{i+2}, e_{i+3}, \dots$ form a Berge-$C_{t+1}^3$.

Next, we prove for the case that $D$ has five consecutive vertices $v_{i-1}$, $v_i$, $v_{i+1}$, $v_{i+2}$, $v_{i+3}$ with $v_i$, $v_{i+1}$, $v_{i+2}$ being blue and $v_{i-1}$, $v_{i+3}$ being red. Then $e_{i-2} = \{v_{i-2}, v_{i-1}, v_i, v\}$ and $e_{i+2} = \{v_{i+2}, v_{i+3}, v_{i+4}, v\}$. Since $\delta_3(H_0) \geq 1$, there exist two unpicked hyperedges $h_1$, $h_2$ such that $\{v_i, v_{i+1}, v\} \subseteq h_1$ and $\{v_{i+1}, v_{i+2}, v\} \subseteq h_2$. We have $h_1 \neq h_2$. Otherwise there exists a hyperedge $h = \{v_i, v_{i+1}, v_{i+2}, v\}$, contradicting the assumption that $v_{i+1}$ is a blue vertex in $D$.
Since $\delta_3(H_0) \geq 1$, there are unpicked hyperedges $h_3$, $h_4$ such that $\{v_{i-1}, v_{i+1}, v\} \subseteq h_3$ and $\{v_{i+1}, v_{i+3}, v\} \subseteq h_4$. We have $h_3 \neq h_1$ and $h_4 \neq h_2$. Otherwise there exists a hyperedge $h_3 = \{v_{i-1}, v_i, v_{i+1}, v\}$ or $h_4 = \{v_{i+1}, v_{i+2}, v_{i+3}, v\}$, contradicting the assumption that $v_i$ or $v_{i+2}$ is a blue vertex in $D$.
Since $\delta_3(H_0) \geq 1$, there exist hyperedges $h_5$, $h_6$ such that $\{v_i, v_{i+2}, v_{i+3}\} \subseteq h_5$ and $\{v_{i-1}, v_i, v_{i+2}\} \subseteq h_6$. If $e_i \neq h_5$, then the vertex sequence $v_{i-2}, v_{i-1}, v, v_{i+1}, v_i, v_{i+2}, v_{i+3}, \dots$ and the hyperedge sequence $e_{i-2}, h_3, h_1, e_i, h_5, e_{i+2}, e_{i+3}, \dots$ form a Berge-$C_{t+1}^3$. Thus $e_i = h_5 = \{v_i, v_{i+1}, v_{i+2}, v_{i+3}\}$. Now the vertex sequence $v_{i-2}, v_{i-1}, v_i, v_{i+2}, v_{i+1}, v, v_{i+3}, \dots$ and the hyperedge sequence $e_{i-2}, h_6, e_i, h_2, h_4, e_{i+2}, e_{i+3}, \dots$ form a Berge-$C_{t+1}^3$.

The remaining case is that $v_i$, $v_{i+1}$ are blue and $v_{i-1}$, $v_{i+2}$ are red. Then $e_{i-2} = \{v_{i-2}, v_{i-1},$ $v_i, v\}$ and $e_{i+1} = \{v_{i+1}, v_{i+2}, v_{i+3}, v\}$. Since $\delta_3(H_0) \geq 1$, there are unpicked hyperedges $h_1$, $h_2$, $h_3$ such that $\{v_i, v_{i+1}, v\} \subseteq h_1$, $\{v_{i-1}, v_{i+1}, v\} \subseteq h_2$ and $\{v_i, v_{i+2}, v\} \subseteq h_3$. We have $h_1 \neq h_2$ and $h_1 \neq h_3$. Otherwise there exists a hyperedge $h = \{v_{i-1}, v_i, v_{i+1}, v\}$ or $h = \{v_i, v_{i+1}, v_{i+2}, v\}$, contradicting the assumption that $v_{i+1}$ is a blue vertex in $D$.
Note that there are hyperedges $h_4$, $h_5$ with $\{v_i, v_{i+2}, v_{i+3}\} \subseteq h_4$ and $\{v_{i-2}, v_{i-1}, v_{i+1}\} \subseteq h_5$ since $\delta_3(H_0) \geq 1$. This forces that $h_4 = e_i = \{v_i, v_{i+1}, v_{i+2}, v_{i+3}\}$ or $h_4 = e_{i+2} = \{v_i, v_{i+2}, v_{i+3}, v_{i+4}\}$. Otherwise the vertex sequence $v_{i-2}, v_{i-1}, v, v_{i+1}, v_i, v_{i+2}, v_{i+3}, \dots$ and the hyperedge sequence $e_{i-2}, h_2, h_1, e_i, h_4, e_{i+2}, e_{i+3}, \dots$ form a Berge-$C_{t+1}^3$. Similarly, we get $h_5 = e_{i-3} = \{v_{i-3}, v_{i-2}, v_{i-1}, v_{i+1}\}$ or $h_5 = e_{i-1} = \{v_{i-2}, v_{i-1}, v_i, v_{i+1}\}$.
If $h_4 = e_i$ and $h_5 = e_{i-1}$, then the vertex sequence $v_{i-3}, v_{i-2}, v_{i-1}, v_i, v, v_{i+1},$ $v_{i+2}, v_{i+3}, \dots$ and the hyperedge sequence $e_{i-3}, e_{i-1}, e_{i-2}, h_1, e_{i+1}, e_i, e_{i+2}, e_{i+3}, \dots$ form a Berge-$C_{t+1}^3$.
If $h_4 = e_{i+2}$ and $h_5 = e_{i-3}$, then $t \geq 6$ because $h_4 \neq h_5$. Since $\delta_3(H_0) \geq 1$, there exist hyperedges $h_6$, $h_6'$, $h_6''$ such that $\{v_{i-2}, v_{i+1}, v_{i+2}\} \subseteq h_6$, $\{v_{i-1}, v_{i+2}, v\} \subseteq h_6'$ and $\{v_{i-1}, v_i, v_{i+3}\} \subseteq h_6''$. Now the vertex sequence $v_{i-3}, v_{i-2}, v_{i+1}, v_{i+2}, v, v_{i-1}, v_i, v_{i+3}, v_{i+4}, \dots$ and the hyperedge sequence $e_{i-3}, h_6, e_{i+1}, h_6', e_{i-2}, h_6'', e_{i+2}, e_{i+3}, e_{i+4}, \dots$ form a Berge-$C_{t+1}^3$.
If $h_4 = e_i$ and $h_5 = e_{i-3}$, then $\delta_3(H_0) \geq 1$ implies that the existence of hyperedges $h_7$ and $h_7'$ such that $\{v_{i-2}, v_{i+1}, v\} \subseteq h_7$ and $\{v_{i-1}, v_i, v_{i+2}\} \subseteq h_7'$. We have $h_7 = h_2 = \{v_{i-2}, v_{i-1}, v_{i+1}, v\}$. Otherwise the vertex sequence $v_{i-3}, v_{i-2}, v_{i+1}, v, v_{i-1}, v_i, v_{i+2}, v_{i+3}, \dots$ and the hyperedge sequence $e_{i-3}, h_7, h_2, e_{i-2}, h_7', e_i, e_{i+2}, e_{i+3}, \dots$ form a Berge-$C_{t+1}^3$. It follows that the vertex sequence $v_{i-3}, v_{i-2}, v_{i-1}, v, v_i, v_{i+1}, \dots$ and the hyperedge sequence $e_{i-3}, h_2, e_{i-2}, h_1, e_i, e_{i+1}, \dots$ form a Berge-$C_{t+1}^3$. The case where $h_4 = e_{i+2}$ and $h_5 = e_{i-1}$ can be verified similarly because of the symmetry.
\end{proof}

\begin{claim}\label{LC4}
If $D$ has four red vertices $v_i$, $v_{i+2}$, $v_j$, $v_{j+2}$ with $j \notin \{i-3, i-2, i-1, i, i+1, i+2\}$, then $H_0$ contains a Berge-$C_{t+1}^3$.
\end{claim}
\begin{proof}
Since $\delta_3(H_0) \geq 1$, there exist hyperedges $h_0$, $h_0'$, $h_0''$, $h$, $h'$ such that $\{v_i, v_j, v\} \subseteq h_0$, $\{v_{i-1}, v_j, v\} \subseteq h_0'$, $\{v_{i-2}, v_j, v\} \subseteq h_0''$, $\{v_{i+1}, v_{i+2}, v_{j+1}\} \subseteq h$ and $\{v_{i+1}, v_{j+1}, v_{j+2}\} \subseteq h'$.
We have $h = h' = \{v_{i+1}, v_{i+2}, v_{j+1}, v_{j+2}\}$; otherwise the vertex sequence $v_{i-1}, v_i, v, v_j, v_{j-1}, \dots, v_{i+3}, v_{i+2},$ $v_{i+1}, v_{j+1}, v_{j+2}, \dots$ and the hyperedge sequence $e_{i-1}, h_0, e_{j-1}, e_{j-2}, e_{j-3}, \dots$, $e_{i+1}, h, h', e_{j+1},$ $e_{j+2}, \dots$ form a Berge-$C_{t+1}^3$.
Furthermore, we have $e_i \neq \{v_i, v_{i+1}, v_{i+2}, v_{j+1}\}$; otherwise the vertex sequence $v_{i-1}, v_i, v, v_j, v_{j-1}, \dots, v_{i+3}, v_{i+2}, v_{i+1}, v_{j+1}, v_{j+2}, \dots$ and the hyperedge sequence $e_{i-1}, h_0, e_{j-1}, e_{j-2}, e_{j-3}, \dots$, $e_{i+1}, e_i, h, e_{j+1}, e_{j+2}, \dots$ form a Berge-$C_{t+1}^3$.

\setcounter{case}{0}
\begin{case}
$e_{i-2} = \{v_{i-2}, v_{i-1}, v_i, v\}$.
\end{case}
Since $\delta_3(H_0) \geq 1$, there exist hyperedges $h_1$, $h_1'$ such that $\{v_i, v_{i+1}, v_{j+1}\} \subseteq h_1$ and $\{v_i, v_{j+1},$ $v_{j+2}\} \subseteq h_1'$. We get $h_1 = h_1' = \{v_i, v_{i+1}, v_{j+1}, v_{j+2}\}$, since otherwise the vertex sequence $v_{i-2}, v_{i-1}, v, v_j, v_{j-1}, \dots, v_{i+2}, v_{i+1}, v_i, v_{j+1}, v_{j+2}, \dots$ and the hyperedge sequence $e_{i-2}, h_0', e_{j-1},$ $e_{j-2}, e_{j-3}, \dots, e_i, h_1, h_1', e_{j+1}, e_{j+2}, \dots$ form a Berge-$C_{t+1}^3$. Thus the vertex sequence $v_{i-1}, v_i, v,$ $v_j, v_{j-1}, \dots, v_{i+3}, v_{i+2}, v_{i+1}, v_{j+1}, v_{j+2}, \dots$ and the hyperedge sequence $e_{i-1}, h_0, e_{j-1}, e_{j-2}, e_{j-3},$ $\dots, e_{i+1}, h, h_1, e_{j+1}, e_{j+2}, \dots$ form a Berge-$C_{t+1}^3$.

\begin{case}
$e_{i-2} \neq \{v_{i-2}, v_{i-1}, v_i, v\}$.
\end{case}
Note that $v_{i-1}$ is a blue vertex in this case. By Claim \ref{LC3}, the vertex $v_{i-2}$ is a red and $e_{i-3} = \{v_{i-3}, v_{i-2}, v_{i-1}, v\}$. Since $\delta_3(H_0) \geq 1$, there exist hyperedges $h_2$, $h_2'$, $h_2''$, $h_2'''$, $h_2''''$ such that $\{v_{i-1}, v_i, v_{i+2}\} \subseteq h_2$, $\{v_{i-1}, v_{i+2}, v_{i+3}\} \subseteq h_2'$, $\{v_i, v_{i+1}, v_{j+1}\} \subseteq h_2''$, $\{v_i, v_{i+2}, v_{i+3}\} \subseteq h_2'''$ and $\{v_{i-1}, v_{i+1}, v_{j+1}\} \subseteq h_2''''$.

We have $h_2' = h_2$ or $h_2' = e_{i+2}$; otherwise the vertex sequence $v_{i-3}, v_{i-2}, v, v_j, v_{j-1}, \dots, v_{i+4},$ $v_{i+3}, v_{i+2}, v_{i-1}, v_i, v_{i+1}, v_{j+1}, v_{j+2}, \dots$ and the hyperedge sequence $e_{i-3}, h_0'', e_{j-1}, e_{j-2}, e_{j-3}, \dots$, $e_{i+2}, h_2', h_2, e_{i-1}, h_2'', h, e_{j+1}, e_{j+2}, \dots$ form a Berge-$C_{t+1}^3$.
We also have $h_2''' = h_2$ or $h_2''' = e_{i+2}$; otherwise the vertex sequence $v_{i-3}, v_{i-2}, v, v_j, v_{j-1}, \dots, v_{i+4}, v_{i+3}, v_{i+2}, v_i, v_{i-1}, v_{i+1}, v_{j+1}, v_{j+2},$ $\dots$ and the hyperedge sequence $e_{i-3}, h_0'', e_{j-1}, e_{j-2}, e_{j-3}, \dots$, $e_{i+2}, h_2''', h_2, e_{i-1}, h_2'''', h, e_{j+1}, e_{j+2},$ $\dots$ form a Berge-$C_{t+1}^3$.
Thus we conclude that $h_2 = h_2'$ or $h_2 = h_2'''$, which implies $h_2 = \{v_{i-1}, v_i, v_{i+2}, v_{i+3}\}$.

\begin{subcase}
$j = i-4$.
\end{subcase}
Since $\delta_3(H_0) \geq 1$, there exist hyperedges $h_3$, $h_3'$ such that $\{v_{i-2}, v_i, v\} \subseteq h_3$ and $\{v_{i-3}, v_{i-1},$ $v_{i+1}\} \subseteq h_3'$. We have $h_3 = h_0 = \{v_{i-4}, v_{i-2}, v_i, v\}$; otherwise the vertex sequence $v_{i-5}, v_{i-4}, v, v_i,$ $v_{i-2}, v_{i-1}, v_{i-3}, v_{i+1}, v_{i+2}, \dots$ and the hyperedge sequence $e_{i-5}, h_0, h_3, e_{i-2}, e_{i-3}, h_3', h, e_{i+1}, e_{i+2},$ $\dots$ form a Berge-$C_{t+1}^3$.
Since $\delta_3(H_0) \geq 1$, there exist hyperedges $h_4$, $h_4'$, $h_4''$ such that $\{v_{i-3}, v_i, v\}$ $\subseteq h_4$, $\{v_{i-3}, v_{i-1}, v_i\} \subseteq h_4'$ and $\{v_{i-2}, v_{i-1}, v_{i+1}\} \subseteq h_4''$. We have $h_4 = h_4' = \{v_{i-3}, v_{i-1}, v_i, v\}$; otherwise the vertex sequence $v_{i-5}, v_{i-4}, v, v_i, v_{i-3}, v_{i-1}, v_{i-2}, v_{i+1}, v_{i+2}, \dots$ and the hyperedge sequence $e_{i-5}, h_3, h_4, h_4', e_{i-3}, h_4'', h, e_{i+1}, e_{i+2}, \dots$ form a Berge-$C_{t+1}^3$.
Since $\delta_3(H_0) \geq 1$, there exist hyperedges $h_5$, $h_5'$ such that $\{v_{i-4}, v_{i-3}, v_{i-1}\} \subseteq h_5$ and $\{v_{i-2}, v_{i+1}, v\} \subseteq h_5'$. Thus the vertex sequence $v_{i-5}, v_{i-4}, v_{i-3}, v_{i-1}, v_i, v, v_{i-2}, v_{i+1}, v_{i+2}, \dots$ and the hyperedge sequence $e_{i-5}, h_5, h_4,$ $e_{i-1}, h_3, h_5', h, e_{i+1}, e_{i+2}, \dots$ form a Berge-$C_{t+1}^3$.

\begin{subcase}
$j \neq i-4$.
\end{subcase}
Since $\delta_3(H_0) \geq 1$, there exist hyperedges $h_6, h_6'$ such that $\{v_{i-1}, v_{i+1}, v_{j+1}\} \subseteq h_6$ and $\{v_{i-1},$ $v_{j+1}, v_{j+2}\} \subseteq h_6'$. We have $h_6 = h_6' = \{v_{i-1}, v_{i+1}, v_{j+1}, v_{j+2}\}$, since otherwise the vertex sequence $v_{i-3}, v_{i-2}, v, v_j, v_{j-1}, \dots, v_{i+4}, v_{i+3}, v_{i+2}, v_i, v_{i+1}, v_{i-1}, v_{j+1}, v_{j+2}, \dots$ and the hyperedge sequence $e_{i-3}, h_0'', e_{j-1}, e_{j-2}, e_{j-3}, \dots$, $e_{i+2}, h_2, e_i, e_{i-1}, h_6, h_6', e_{j+1}, e_{j+2}, \dots$ form a Berge-$C_{t+1}^3$. The vertex sequence $v_{i-1}, v_i, v, v_j, v_{j-1}, \dots, v_{i+3}, v_{i+2}, v_{i+1}, v_{j+1}, v_{j+2}, \dots$ and the hyperedge sequence $e_{i-1}, h_0, e_{j-1}, e_{j-2}, e_{j-3}, \dots, e_{i+1}, h, h_6, e_{j+1}, e_{j+2}, \dots$ form a Berge-$C_{t+1}^3$.
\end{proof}

\begin{claim}\label{LC5}
If $t \geq 7$, then $H_0$ contains a Berge-$C_{t+1}^3$.
\end{claim}
\begin{proof}
If all vertices are red, then $H_0$ contains a Berge-$C_{t+1}^3$ by Claim \ref{LC4}. We may assume that $v_1$ is blue. 
Suppose to the contrary that $H_0$ is Berge-$C_{t+1}^3$-free.
By Claim \ref{LC3}, $v_2$ and $v_t$ must be red. If $v_i$ is blue for some $i$ with $4 \leq i \leq t-2$, then $v_{i-1}$ and $v_{i+1}$ are red by Claim \ref{LC3}. Apply Claim \ref{LC4} to $\{v_2, v_{i-1} ,v_{i+1}, v_t\}$, we obtain a Berge-$C_{t+1}^3$, a contradiction. Thus $v_i$ is red for all $4 \leq i \leq t-2$. In particular, vertices $v_2$, $v_4$, $v_{t-2}$ and $v_t$ are red. Then we get a Berge-$C_{t+1}^3$ by applying Claim \ref{LC4} to these four vertices, a contradiction. In summary, $H_0$ always contains a Berge-$C_{t+1}^3$ when $t\geq 7$.
\end{proof}

Now we may assume that $t = 5$ or $6$. By Claim \ref{LC3}, we further assume that the blue vertices form an independent set of $D$. Therefore, $D$ must be one of the following 8 configurations.

\begin{figure}[htbp]
  \centering
  \begin{tikzpicture}[scale=0.6]
    \drawCfive{0}{0}{$D_1$}{red}{blue}{red}{red}{blue};
    \drawCfive{6}{0}{$D_2$}{red}{red}{red}{red}{blue};
    \drawCfive{12}{0}{$D_3$}{red}{red}{red}{red}{red};
    \drawCsix{18}{0}{$D_4$}{red}{blue}{red}{blue}{red}{blue};
    \drawCsix{0}{-7}{$D_5$}{red}{blue}{red}{red}{red}{blue};
    \drawCsix{6}{-7}{$D_6$}{red}{red}{blue}{red}{red}{blue};
    \drawCsix{12}{-7}{$D_7$}{red}{red}{red}{red}{red}{blue};
    \drawCsix{18}{-7}{$D_8$}{red}{red}{red}{red}{red}{red};
  \end{tikzpicture}
  \caption{All possible configurations of $D$.}
  \label{F}
\end{figure}

We will extend $C$ to a Berge-$C_{t+1}^3$ in $H_0$ under all configurations.

\setcounter{case}{0}
\begin{case}
$D = D_1$.
\end{case}
By the coloring of $V(D)$, we know that $e_2 = \{v_2, v_3, v_4, v\}$, $e_3 = \{v_3, v_4, v_5, v\}$, $e_5 = \{v_1, v_2, v_5, v\}$ and $v \notin e_1 \cup e_4$.
Since $\delta_3(H_0) \geq 1$, there are hyperedges $h_1,h_2,\ldots,h_7$ with
$$\{v_1, v_3, v_4\} \subseteq h_1, \{v_1, v_4, v\} \subseteq h_2, \{v_2, v_3, v_5\} \subseteq h_3, \{v_1, v_3, v\} \subseteq h_4,$$
$$\{v_2, v_4, v_5\} \subseteq h_5, \{v_1, v_2, v_4\} \subseteq h_6, \{v_1, v_3, v_5\} \subseteq h_7.$$

Assume $h_1 = \{v_1, v_3, v_4, v_5\}$. If the vertex sequence $v_1, v, v_4, v_5, v_3, v_2$ and the hyperedge sequence $h_2, e_3, h_1, h_3, e_1, e_5$ do not form a Berge-$C_6^3$, then there are repeated edges, which can only be the situation that $h_3 = e_1 = \{v_1, v_2, v_3, v_5\}$. We further consider the the vertex sequence $v_1, v, v_3, v_2, v_5, v_4$ and the hyperedge sequence $h_4, e_2, e_1, h_5, h_1, h_2$. It is a Berge-$C_6^3$ unless $h_2 = h_4 = \{v_1, v_3, v_4, v\}$. However, in the latter case,  
looking at the vertex sequence $v_1, v_3, v, v_5, v_2, v_4$ and the hyperedge sequence $h_2, e_3, e_5, h_5, h_6, h_1$, we get a Berge-$C_6^3$ unless $h_5 = h_6 = \{v_1, v_2, v_4, v_5\}$. Now with the full knowledge of hyperedges $h_1,e_1,h_2,h_5$, we can see that the vertex sequence $v_1, v_2, v_5, v_3, v, v_4$ and the hyperedge sequence $e_5, e_1, e_3, e_2, h_2, h_5$ form a Berge-$C_6^3$. 
Similarly, when $h_1 =\{v_1, v_2, v_3,$ $v_4\}$, we also obtain a Berge-$C_6^3$. Thus we may assume that $h_1 = \{v_1, v_3, v_4\}$ or $h_1 = \{v_1, v_3, v_4, v\}$.  Now the vertex sequence $v_1, v_3, v_5, v, v_2, v_4$ and the hyperedge sequence $h_7, e_3, e_5, e_2, h_5, h_1$ form a Berge-$C_6^3$.

\begin{case}
$D \in \{D_2, D_3\}$.
\end{case}

In this case, vertices $v_1$, $v_2$, $v_3$, $v_4$ are red. So $e_1 = \{v_1, v_2, v_3, v\}$, $e_2 = \{v_2, v_3, v_4, v\}$, $e_3 = \{v_3, v_4, v_5, v\}$ and $e_5 = \{v_1, v_2, v_5, v\}$. Since $\delta_3(H_0) \geq 1$, there exist hyperedges $h_i$ ($1 \leq i \leq 5$) such that
$$\{v_1, v_3, v_4\} \subseteq h_1, \{v_1, v_4, v\} \subseteq h_2, \{v_2, v_3, v_5\} \subseteq h_3, \{v_1, v_2, v_4\} \subseteq h_4, \{v_1, v_3, v_5\} \subseteq h_5.$$
If $h_1 = \{v_1, v_2, v_3, v_4\}$, then the vertex sequence $v_1, v_2, v_3, v, v_4, v_5$ and the hyperedge sequence $h_1, e_1, e_2, e_3, e_4, e_5$ form a Berge-$C_6^3$. If $h_1 = \{v_1, v_3, v_4, v_5\}$, then the vertex sequence $v_1, v, v_4, v_5,$ $v_3, v_2$ and the hyperedge sequence $h_2, e_3, h_1, h_3, e_1, e_5$ form a Berge-$C_6^3$.
Thus we have $h_1 = \{v_1, v_3, v_4\}$ or $h_1 = \{v_1, v_3, v_4, v\}$. Then the vertex sequence $v_1, v_4, v_2, v, v_5, v_3$ and the hyperedge sequence $h_4, e_2, e_5, e_3, h_5, h_1$ form a Berge-$C_6^3$.

\begin{case}
$D \in \{D_4, D_5, D_7, D_8\}$.
\end{case}
In this case, $v_1$, $v_3$, $v_5$ are red vertices. We have $e_2=\{v_2, v_3, v_4, v\}$, $e_4=\{v_4, v_5, v_6, v\}$ and $e_6=\{v_1, v_2, v_6, v\}$. Since $\delta_3(H_0) \geq 1$, there exist hyperedges $h_i$ ($1 \leq i \leq 5$) such that
$$\{v_1, v_5, v\} \subseteq h_1, \{v_3, v_5, v_6\} \subseteq h_2, \{v_3, v_4, v_6\} \subseteq h_3, \{v_1, v_2, v_4\} \subseteq h_4, \{v_1, v_4, v\} \subseteq h_5.$$
We may assume that $h_2 = h_3 = \{v_3, v_4, v_5, v_6\}$; otherwise the vertex sequence $v_1, v, v_5, v_6, v_3, v_4, v_2$ and the hyperedge sequence $h_1, e_4, h_2, h_3, e_2, h_4, e_6$ form a Berge-$C_7^3$. By symmetry, there exist three hyperedges $h_6 = \{v_2, v_3, v_4, v_5\}$, $h_7 = \{v_1, v_2, v_3, v_6\}$ and $h_8 = \{v_1, v_2, v_5, v_6\}$. Thus the vertex sequence $v_1, v, v_4, v_5, v_3, v_2, v_6$ and the hyperedge sequence $h_5, e_4, h_2, h_6, h_7, h_8, e_6$ form a Berge-$C_7^3$.

\begin{case}
$D = D_6$.
\end{case}

Since $\delta_3(H_0) \geq 1$, there exist hyperedges $h_i$ ($1 \leq i \leq 6$) such that
$$\{v_1, v_4, v\} \subseteq h_1, \{v_3, v_5, v_6\} \subseteq h_2, \{v_2, v_3, v_6\} \subseteq h_3,$$
$$\{v_1, v_4, v_5\} \subseteq h_4, \{v_2, v_4, v\} \subseteq h_5, \{v_1, v_5, v\} \subseteq h_6.$$
We may assume that $h_2 = h_3 = \{v_2, v_3, v_5, v_6\}$; otherwise the vertex sequence $v_1, v, v_4, v_5, v_6, v_3, v_2$ and the hyperedge sequence $h_1, e_3, e_4, h_2, h_3, e_1, e_6$ form a Berge-$C_7^3$. By symmetry, there exists a hyperedge $h_7 = \{v_1, v_3, v_4, v_6\}$.
We may further assume that $h_4 = e_5 = \{v_1, v_4, v_5, v_6\}$; otherwise the vertex sequence $v_1, v_5, v_4, v, v_2, v_3, v_6$ and the hyperedge sequence $h_4, e_3, h_5, e_1, h_2, h_7, e_5$ form a Berge-$C_7^3$.
Thus the vertex sequence $v_1, v, v_5, v_6, v_4, v_3, v_2$ and the hyperedge sequence $h_6, e_4, e_5, h_7, e_2, e_1, e_6$ form a Berge-$C_7^3$.

Consequently, we can always find a Berge-$C_{t+1}^3$ in $H_0$. By Proposition \ref{P}, $H$ also contains a Berge-$C_{t+1}^3$. This completes the proof.
\end{proof}

\begin{proof}[Proof of Theorem \ref{T}]
By Lemmas \ref{L11} and \ref{L12}, the cases $n = 4$ and $n = 5$ are settled. For $n = 6$, Lemmas \ref{L11} and \ref{L12} implies that $H$ always contains a Berge-$C_4^3$ and it contains a Berge-$C_5^3$ unless $H = H_6$. By Lemma \ref{L}, we conclude that $H$ contains a Berge-$C_6^3$ if $H \neq H_6$. Furthermore, if $H = H_6$, then the vertex sequence $v_1, v_3, v_5, v_2, v_4, v_6$ and the corresponding $3$-hyperedges of $H_6$ form a $C_6^3$.

For $n \geq 7$, Lemmas \ref{L11} and \ref{L12} ensures that $H$ always contains a Berge-$C_4^3$ and it contains a Berge-$C_5^3$ unless $H \in \{H_7, H_8\}$. Moreover, if $H = H_7$ or $H = H_8$, then $H_S^3 \cong H_6$ for every $6$-vertex subset $S \subseteq V(H)$, so $H$ contains a Berge-$C_6^3$  by Proposition \ref{P}. Thus, by Lemma \ref{L}, we conclude that $H$ contains a Berge-$C_t^3$ for all $t \geq 6$. This completes the proof of Theorem \ref{T}.
\end{proof}


\begin{thebibliography}{28}
\bibitem{AS}
R. Anstee, S. Salazar, Forbidden Berge hypergraphs, Electron. J. Combin. 24 (2017), 1.59.

\bibitem{BGKKP}
M. Balko, D. Gerbner, D. Kang, Y. Kim, C. Palmer, Hypergraph based Berge hypergraphs, Graphs Combin. 38 (2022), 11.

\bibitem{B}
C. Berge, Graphs and hypergraphs, North Holland Publishing Company, Amsterdam, 1973.

\bibitem{BGHS}
J. Bermond, A. Germa, M. Heydemann, D. Sotteau, Hypergraphes hamiltoniens, Probl\`emes combinatoires et th\'eorie des graphes, Colloq. Internat. CNRS, Univ. Orsay, Orsay, 1976, 260 (1978), 39--43.

\bibitem{CEP}
D. Clemens, J. Ehrenm\"uller, Y. Person, A Dirac-type theorem for Berge cycles in random hypergraphs, Electron. J. Combin. 27 (2020), 3.39.

\bibitem{CP}
M. Coulson, G. Perarnau, A rainbow Dirac's theorem, SIAM J. Discrete Math. 34 (2020), 1670--1692.

\bibitem{D}
G. Dirac, Some theorems on abstract graphs, Proc. London Math. Soc. (3) 2 (1952), 69--81.

\bibitem{DGS}
P. Dorbec, S. Gravier, G. S\'ark\"ozy, Monochromatic Hamiltonian $t$-tight Berge-cycles in hypergraphs, J. Graph Theory 59 (2008), 34--44.

\bibitem{DF}
A. Dudek, M. Ferrara, Extensions of results on rainbow {H}amilton cycles in uniform hypergraphs, Graphs Combin. 31 (2015), 577--583.

\bibitem{DFR}
A. Dudek, A. Frieze, A. Ruci\'nski, Rainbow Hamilton cycles in uniform hypergraphs, Electron. J. Combin. 19 (2012), 1.46.

\bibitem{FKL}
Z. F\"uredi, A. Kostochka, R. Luo, Berge cycles in non-uniform hypergraphs, Electron. J. Combin. 27 (2020), 3.9.

\bibitem{GP}
D. Gerbner, C. Palmer, Extremal results for Berge-hypergraphs, SIAM J. Discrete Math. 31 (2017), 2314--2327.

\bibitem{GSS1}
A. Gy\'arf\'as, G. S\'ark\"ozy, E. Szemer\'edi, Long monochromatic Berge cycles in colored 4-uniform hypergraphs, Graphs Combin. 26 (2010), 71--76.

\bibitem{GSS2}
A. Gy\'arf\'as, G. S\'ark\"ozy, E. Szemer\'edi, Monochromatic Hamiltonian 3-tight Berge cycles in 2-colored 4-uniform hypergraphs, J. Graph Theory 63 (2010), 288--299.

\bibitem{GLSZ}
E. Gy\H{o}ri, N. Lemons, N. Salia, O. Zamora, The structure of hypergraphs without long Berge cycles, J. Combin. Theory Ser. B 148 (2021), 239--250.

\bibitem{GS}
E. Gy\H{o}ri, N. Salia, Linear three-uniform hypergraphs with no Berge path of given length, J. Combin. Theory Ser. B 171 (2025), 36--48.

\bibitem{GSZ}
E. Gy\H{o}ri, N. Salia, O. Zamora, Connected hypergraphs without long Berge-paths, European J. Combin. 96 (2021), 103353.

\bibitem{HM}
A. Halfpapa, V. Magnan, Positive co-degree thresholds for spanning structures, Electron. J. Combin. 33 (2026), 1.48.

\bibitem{KK}
G. Katona, H. Kierstead, Hamiltonian chains in hypergraphs, J. Graph Theory 30 (1999), 205--212.

\bibitem{KLM}
A. Kostochka, R. Luo, G. McCourt, Dirac-type theorems for long Berge cycles in hypergraphs, J. Combin. Theory Ser. B 168 (2024), 159--191.

\bibitem{LW1}
L. Lu, Z. Wang, On the cover Ramsey number of Berge hypergraphs, Discrete Math. 343 (2020), 111972.

\bibitem{LW}
L. Lu, Z. Wang, On Hamiltonian Berge cycles in $[3]$-uniform hypergraphs, Discrete Math. 344 (2021), 112462.

\bibitem{LW2}
L. Lu, Z. Wang, On the cover Tur\'an number of Berge hypergraphs, European J. Combin. 98 (2021), 103416.

\bibitem{MHG}
Y. Ma, X. Hou, J. Gao, A Dirac-type theorem for uniform hypergraphs, Graphs Combin. 40 (2024), 76.

\bibitem{MO}
L. Maherani, G. Omidi, Monochromatic Hamiltonian Berge-cycles in colored hypergraphs, Discrete Math. 340 (2017), 2043--2052.

\bibitem{RRRSS}
C. Reiher, V. R\"odl, A. Ruci\'nski, M. Schacht, E. Szemer\'edi, Minimum vertex degree condition for tight Hamiltonian cycles in 3-uniform hypergraphs, Proc. Lond. Math. Soc. (3) 119 (2019), 409--439.

\bibitem{RR}
V. R\"odl, A. Ruci\'nski, Dirac-type questions for hypergraphs---a survey (or more problems for Endre to solve), Bolyai Soc. Math. Stud. 21 (2010), 561--590.

\bibitem{Z}
Y. Zhao, Recent advances on Dirac-type problems for hypergraphs, Recent trends in combinatorics 159 (2016), 145--165.
\end{thebibliography}
\end{document}